\newtheorem{thm}{Theorem}[section]
\newtheorem{prop}[thm]{Proposition}
\newtheorem{lemma}[thm]{Lemma}
\theoremstyle{definition}
\newtheorem{definition}[thm]{Definition}
\title{Subdivergence-free gluings of trees}
\author{Xinle Dai}
\address{Department of Mathematics, Harvard University, Cambridge, Massachusetts, USA}
\email{xdai@math.harvard.edu}
\author{Jordan Long}
\author{Karen Yeats}
\address{Department of Combinatorics and Optimization, University of Waterloo, Waterloo, Canada}
\email{jmzlong@uwaterloo.ca}
\email{kayeats@uwaterloo.ca}
\thanks{JL was supported by an NSERC USRA during this work.  KY is supported by an NSERC Discovery grant and the Canada Research Chairs program.  KY thanks Dirk Kreimer for discussions of cut graphs in quantum field theory.  This work was done partly while XD was at University of Waterloo}
\begin{document}

\begin{abstract}
 A gluing of two rooted trees, also known as a tanglegram, is an identification of their leaves and un-subdivision of the resulting 2-valent vertices.  A gluing of two rooted trees is subdivergence-free if it has no 2-edge cuts with both roots on the same side of the cut.  The problem and language is motivated by quantum field theory.  We enumerate subdivergence-free gluings for certain families of trees, showing a connection with connected permutations, and we give algorithms to compute subdivergence-free gluings.
\end{abstract}

\maketitle

\section{Introduction}

\subsection{Set up}

We are interested in a purely combinatorial question about rooted trees that arose from a question in quantum field theory.

Let $t_1$ and $t_2$ be two rooted trees with the same number of leaves.  Let $n$ be the number of leaves.  There are $n!$ ways to identify the leaves of $t_1$ with the leaves of $t_2$.  We view each such identification as a graph in almost the obvious way, but with one important modification: the identified leaves are identified as graph vertices, resulting in a 2-valent vertex for each identified pair of leaves, as in \ref{fig:gluing_trees_unfinished}, and then one of the incident edges of each of these 2-valent vertices is contracted, resulting in each identified pair of leaves along with their two incident edges becoming one edge in the graph, as in \ref{fig:gluing_trees_finished}. The graph has two distinguished vertices corresponding to the roots of $t_1$ and $t_2$.  These $n!$ graphs are the \emph{gluings} of $t_1$ and $t_2$.

Another way to set up the definition of the gluings of $t_1$ and $t_2$ is to define the rooted trees so that their leaves are unpaired half edges.  Then the identification of leaves of $t_1$ with leaves of $t_2$ simply pairs the corresponding half edges into edges, directly building the graph described above.

Such gluings of trees have been studied in the context of phylogenetics and then in combinatorics under the name of \emph{tanglegrams} \cite{10.1093/bioinformatics/btr210, 5551100, BILLEY2017239, ralaivaosaona_et_al:LIPIcs.AofA.2018.32, GESSEL2021105498}.

We say a gluing of $t_1$ and $t_2$ has a \emph{subdivergence} if it has a 2-edge cut with the property that $t_1$ and $t_2$ each contain one edge of the cut as a non-leaf edge and one of the components of the cut doesn't contain a root vertex. In the tanglegram literature, this notion is called \emph{irreducibility} \cite{ralaivaosaona_et_al:LIPIcs.AofA.2018.32, BLACK2023102550}.  For example, Figure \ref{fig:subdivergence} is an example of a gluing with a subdivergence, while Figure \ref{fig:gluing_trees_finished} is a subdivergence-free gluing.

\begin{figure}
\begin{subfigure}{.3\textwidth}
  \centering
  \includegraphics[]{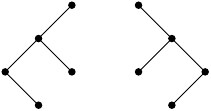}
  \caption{}
  \label{fig:rooted_trees}
\end{subfigure}%
\begin{subfigure}{.3\textwidth}
  \centering
  \includegraphics[]{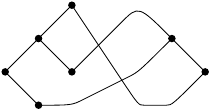}
  \caption{}
  \label{fig:gluing_trees_unfinished}
\end{subfigure}%
\begin{subfigure}{.3\textwidth}
  \centering
  \includegraphics[]{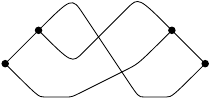}
  \caption{}
  \label{fig:gluing_trees_finished}
\end{subfigure}
    \caption{A gluing of two rooted trees. (A) shows rooted trees $t_1$ and $t_2$. (B) shows the leaves of $t_1$ and $t_2$ identified with each other. (C) shows the gluing: the result of contracting an edge incident to each leaf pairing.}
    \label{fig:gluing}
\end{figure}

\begin{figure}
    \centering
    \includegraphics{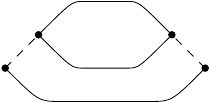}
    \caption{A gluing with a subdivergence from cutting the edges indicated with dashed lines.}
    \label{fig:subdivergence}
\end{figure}

We are interested in how many subdivergence-free gluings a pair of trees has.
\begin{definition}
Given a pair $t_1$ and $t_2$ of rooted trees with the same number of leaves define 
\[
    n(t_1,t_2)
\]
to be the number of subdivergence-free gluings of $t_1$ and $t_2$.
\end{definition}

How many subdivergence-free gluings a pair of trees might have is an interesting question from the perspective of pure enumeration, linking to interesting integer sequences and objects such as connected permutations (see Section~\ref{sec perm}).  We were not aware, during the development and writing of this project, of the previous appearance of these objects as irreducible tanglegrams.  However, the differing motivations lead to different questions on the objects: from the phylogenetic perspective, planarity and number of crossings in the non-planar case is of primary importance \cite{5551100, BLACK2023102550, https://doi.org/10.1002/jgt.22370}, and enumeration questions have generally been studied by running over all trees, while from the quantum field theory perspective, subdivergence-free is of primary importance, and we are interested in enumeration with the input trees fixed.  Consequently, our results are broadly complementary to the existing literature, and also lay out a very different application of these objects.

\subsection{Physics motivation}\label{sec phys mot}

In perturbative quantum field theory one is interested in calculating and understanding the probability amplitudes of particle interactions and other related physical quantities via series expansions.  The most interesting series expansion for us is the Feynman diagram expansion.  In Feynman diagram expansions amplitudes and other quantities are expanded as a series of integrals, called \emph{Feynman integrals}, and each integral corresponds to a graph called the \emph{Feynman graph}, see Figure~\ref{fig:yukawa}.  The Feynman graph can be viewed as an illustration of one possible way the overall interaction could have occurred: the edges correspond to particles and the vertices to interactions between them, external edges correspond to the incoming and outgoing particles of the process we are interested in, while the internal edges correspond to unobservable virtual particles, and the sum is over all possibilities for these.  

\begin{figure}
    \centering
    \includegraphics{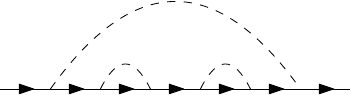}
    \caption{An example of a Feynman graph, in this case the directed edges represent a fermion propagating, while the dashed edges represent mesons.  There are two external edges, one at the left and one at the right.}
    \label{fig:yukawa}
\end{figure}

There are many complexities in this picture that we will not go into at all, but one complexity that is important for the present purposes is that in the interesting cases the Feynman integrals are divergent and so a process known as \emph{renormalization} was developed in order to correct this problem and give a way of obtaining finite, physically correct calculations from these integrals.  Small edge cuts in the graphs (exactly what counts as small depends on the physical theory in question, but typically small is at most three or at most four) result in subgraphs which are themselves divergent, and dealing with these \emph{subdivergences} is an important part of renormalization.  \emph{Propagator subdivergences} are subgraphs which do not contain any external edges but which can be disconnected from the full graph by cutting two edges.\footnote{Bridge edges which upon removal give a component with exactly one external edge also give propagator subdivergences, but by a Legendre transform, see \cite{JKM, Bthesis} for a combinatorial presentation, we may restrict to the bridgeless case and so only consider the kind of propagator subdivergences defined above.}  The subdivergence structure of Feynman graphs gives a Hopf algebra of Feynman graphs and Feynman graphs with no subdivergences are known as \emph{primitive} since they are primitive in the Hopf algebraic sense, see \cite{ck0}.

A \emph{Cutkosky cut} in a Feynman graph is a set of internal edges $C$ of a Feynman graph $\Gamma$ so that removing these edges disconnects $\Gamma$ into $\Gamma_1, \Gamma_2, \ldots, \Gamma_k$ and such that each edge of $C$ has its two ends in different components $\Gamma_i$ (that is, the cut is minimal given the $\Gamma_i$). Additionally we want at least one external edge of the graph in each $\Gamma_i$. Then Cutkosky's theorem gives a formula for calculating the monodromy around certain singularities of the Feynman integral in terms of the $\Gamma_i$ and $\Gamma/\prod\Gamma_i$.  See \cite{BKcut} for a mathematical take on the Cutkosky framework.

In \cite{KYcut}, the third author along with Dirk Kreimer considered series of graphs with Cutkosky cuts and recursive equations building these series.  These equations are similar in form to diagrammatic Dyson-Schwinger equations in quantum field theory, see \cite{Ybook}, which are equations that describe how to build sums of Feynman graphs recursively by inserting, or in parallel how to build the analagous sums of Feynman integrals recursively by an integral equation.  Sticking to the diagrammatic side, we only need to insert into primitive graphs as the subdivergences come from the insertions.  If we want to better understand the monodromy, we need to incorporate Cutkosky cuts into this framework, but the graphs we insert into remain primitive.  In \cite{KYcut}, we work in the \emph{core Hopf algebra} setting, so any bridgless subgraph is divergent, but one could also work in a renormalization Hopf algebra where only small edge cuts lead to divergences, which is what leads to the problem considered in this paper.

Because of this, we are interested in better understanding primitive Feynman graphs with Cutkosky cuts.  The first case to consider is when the Cutkosky cut cuts the graph into exactly two pieces, each with a single external edge.  These cuts tell us about normal thresholds in the Feynman integral.  To further simplify we may additionally restrict to the case where the Cutkosky cut cuts every cycle of the graph so that the components after cutting are trees.  These cuts correspond to almost-leading singularities (see Remark 4.1 and Section 5.6 of \cite{KYcut}).  If we reverse the problem then we are considering gluing two trees along the Cutkosky cut.  The original external edges become the roots of the two trees while the cut edges become the leaves.  Our original graph was primitive so the gluing should not have subdivergences.  Because the original graph had only two external edges, it is actually most interesting to only forbid propagator subdivergences.  Additionally the vertices in our Feynman graphs will have degree at least 3 and so any propagator subdivergence from gluing two trees will cross the Cutkosky cut.  This brings us back to the combinatorial problem we began with --- we wish to understand (propagator) subdivergence-free gluings of trees.

\subsection{Initial observations}\label{sec initial}


We note a few preliminary observations that help us understand the problem of counting subdivergence-free gluings of trees. First, consider the most basic family of trees, fans, that is trees consisting of a root and a positive number of leaves all of which are children of the root. If either tree $t_1$ or $t_2$ is a fan with $j$ leaves, $n(t_1,t_2) = j!$ since the fan has no non-leaf edges and so no cuts of the required type to give subdivergences can exist and so any identification of the leaves of $t_1$ with the leaves of $t_2$ gives a subdivergence-free gluing. 


If $t_1$ and $t_2$ are rooted trees, and $e$ is an edge of $t_1$, then $n(t_1 / e, t_2) \geq n(t_1, t_2)$. This is because every subdivergence-free gluing of $t_1/e$ and $t_2$ gives a subdivergnce-free gluing of $t_1$ and $t_2$ by identifying the leaves in the same way and cutting the same edges, but the converse does not hold as the cut may use edge $e$.

However, whenever we have a 2-valent vertex in one of the trees, contracting an edge will not change the number of subdivergence-free gluings, since cuts using both edges incident to the 2-valent vertex are forbidden as they do not use an edge from each tree, and if a cut using one of the edges incident to the 2-valent vertex defines a subdivergence then it remains so after swapping which edge incident to the 2-valent vertex is used.

Given a pair of trees, one could count the number of subdivergence-free gluings by brute force; simply check each permutation for subdivergences.  We will present better algorithms in Section~\ref{sec algs}. 

\subsection{Outline}

The paper will approach counting subdivergence-free gluings in two ways. The first, described in Sections~\ref{sec perm} and ~\ref{sec families}, explores closed-form expressions to count subdivergence-free gluings of specific infinite families of tree pairings. We present useful lemmas and definitions in Section~\ref{sec perm}, and derive closed-form expressions for three different families of trees in Section~\ref{sec families}. The second approach focuses on algorithms to count subdivergence-free gluings between arbitrary pairs of trees, and is covered in Section~\ref{sec algs}. We present two recursive algorithms, one based on dealing recursively with the children of the root and the other based on the techniques used to count the specific families in Section~\ref{sec families}. Section~\ref{sec discussion} discusses potential future directions and relate our results back to questions in quantum field theory.

\section{Connected permutations and generalizations}\label{sec perm}

In this section we present a number of definitions that will be useful for our counting in Section~\ref{sec families}. These definitions also make explicit the connection between subdivergence-free gluings of trees and established classes of permutations.

Permutations of $\{1,\dots, n\}$ that do not map any prefix of $\{1,\dots,n\}$ to itself have been a standard example used in enumeration since at least Comtet \cite{Cbook}.  See \cite{Kperm} for a list of many such appearances.  These permutations are known as connected permutations, irreducible permutations, or indecomposable permutations and appear as A003319 in \cite{connected_permutations_oeis}.  Specifically, we can define them as follows.

\begin{definition}
A permutation $\sigma$ of $\{1,\dots,n\}$ is said to \emph{fix a prefix of size $j$} if \[\sigma(\{1,\dots,j\}) = \{1,\dots,j\}\].
\end{definition}

\begin{definition}
A \emph{connected permutation of size $n$} is a permutation $\sigma$ of $\{1,\dots,n\}$ such that for all $j \in \{1,\dots,n-1\}$, $\sigma$ does not fix a prefix of length $j$. We denote the number of connected permutations of size $n$ by $c_n$.
\end{definition}

The following expression for $c_n$ is well-known, but we give a proof as a warm-up to the arguments of Section~\ref{ssec preproc} as their structure is similar.
\begin{lemma}\label{lem connectedpermutation}
$c_n = n! - \sum_{i=1}^{n-1} i! c_{n-i}$
\end{lemma}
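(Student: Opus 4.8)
The plan is to count by complementary counting: since there are $n!$ permutations of $\{1,\dots,n\}$ in total, it suffices to count those that are \emph{not} connected and show this count equals $\sum_{i=1}^{n-1} i!\,c_{n-i}$. The governing idea is that a permutation splits at its first fixed prefix, leaving a connected permutation on that prefix and a completely unconstrained permutation on the remaining entries.

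More precisely, for an arbitrary permutation $\sigma$ of $\{1,\dots,n\}$ I would let $j = j(\sigma)$ be the smallest positive integer for which $\sigma$ fixes a prefix of size $j$, that is, $\sigma(\{1,\dots,j\}) = \{1,\dots,j\}$. Such a $j$ always exists because $j = n$ always works, and by definition $\sigma$ is connected exactly when $j(\sigma) = n$. The key structural claim to record is that the restriction $\sigma|_{\{1,\dots,j\}}$ is a connected permutation of $\{1,\dots,j\}$: this follows from minimality of $j$, since if that restriction fixed a prefix of size $k < j$ then so would $\sigma$, contradicting the choice of $j$. Meanwhile, because $\sigma$ fixes $\{1,\dots,j\}$ setwise it also permutes the complement $\{j+1,\dots,n\}$, and the restriction $\sigma|_{\{j+1,\dots,n\}}$ is subject to no further constraint.

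This sets up a bijection between permutations of $\{1,\dots,n\}$ and pairs consisting of a connected permutation of size $j$ on the prefix together with an arbitrary permutation of size $n-j$ on the suffix, as $j$ ranges over $1,\dots,n$. Counting the pairs for each fixed $j$ gives $c_j\,(n-j)!$, so summing yields
\[ n! = \sum_{j=1}^{n} c_j\,(n-j)!. \]
Isolating the $j=n$ term, which contributes exactly $c_n \cdot 0! = c_n$, and reindexing the remaining sum via $i = n-j$ then produces $c_n = n! - \sum_{i=1}^{n-1} i!\,c_{n-i}$, as desired.

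The step I expect to require the most care is verifying that the decomposition is genuinely a bijection: that every permutation decomposes this way for a unique $j$, that the prefix part is forced to be connected, and, crucially, that the suffix part is completely free, so that reassembling any connected prefix with any suffix permutation yields a valid permutation having the prescribed value of $j$. The last point needs the observation that for $k < j$ the combined permutation restricted to $\{1,\dots,k\}$ agrees with the connected prefix part, which does not fix $\{1,\dots,k\}$, so the first fixed prefix of the reassembled permutation is indeed $j$. Once this bookkeeping is pinned down the identity is immediate, and the same first-block decomposition is the template that will reappear in Section~\ref{ssec preproc}.
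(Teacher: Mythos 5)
Your proof is correct, but it runs the decomposition in the mirror direction from the paper's. You split a permutation at its \emph{smallest} fixed prefix, so the prefix block is forced to be connected and the suffix is completely free, yielding the convolution identity $n! = \sum_{j=1}^{n} c_j\,(n-j)!$, which you then rearrange. The paper instead organizes the complementary count by the \emph{largest} proper fixed prefix: the prefix of size $i$ is filled arbitrarily in $i!$ ways and the remaining $n-i$ elements must then form a connected permutation, giving the subtracted terms $i!\,c_{n-i}$ directly. The two decompositions happen to produce literally the same sum, because $\sum_{i=1}^{n-1} i!\,c_{n-i}$ reads correctly under either interpretation (arbitrary block of size $i$ with connected block of size $n-i$, or connected block of size $n-i$ with arbitrary block of size $i$). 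What the paper's choice buys is that it generalizes verbatim to Lemma~\ref{lem sconnectedpermutation}, where one subtracts over the maximal $i\in S$ whose prefix is fixed, with an arbitrary block of size $i$ in front and a shifted $S$-connectedness condition behind; your minimal-prefix decomposition would generalize to a different (though equally valid) recursion, $c_n^S = n! - \sum_{i\in S} c_i^{S\cap\{1,\dots,i-1\}}\,(n-i)!$, with the roles of the factorial and the connected factor reversed. Your version buys the cleaner bijective statement $n!=\sum_{j=1}^{n} c_j\,(n-j)!$ (the standard first-indecomposable-block decomposition underlying the generating-function identity for A003319) and needs no separate base case for $n=1$, since the empty sum handles it. The reassembly check you flag as the delicate step is exactly the right thing to verify, and your argument for it is sound.
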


\begin{proof}
From both the definition of $c_n$ and the expression above we have that $c_1 = 1$. 

For $n > 1$, we take the $n!$ permutations of $\{1,\dots,n\}$, and subtract the number of permutations that fix prefixes. We count permutations that fix a prefix by summing the number of permutations that fix a prefix of length $i$ and no longer, for each value of $i$ from $1$ through $n-1$. We count permutations that fix prefixes of length $i$ and no longer by multiplying the $i!$ ways to fix $\{1,\dots,i\}$ by the $c_{n-i}$ ways to map the rest of the elements so that overall no prefix longer than $i$ is fixed.
\end{proof}

We will find it useful to extend the notion of connected permutation to a more general definition.

\begin{definition}
Let $S \subseteq \{1,\dots,n-1\}$. An \emph{S-connected permutation of size $n$} is a permutation $\sigma$ of $\{1,\dots,n\}$ such that for all $j \in S$, $\sigma$ does not fix a prefix of length $j$. Let $c_n^S$ denote the number of S-connected permutations of size $n$.
\end{definition}

\begin{lemma}\label{lem sconnectedpermutation}
$c_n^S = n! - \sum_{i \in S} i! c_{n-i}^{\{1,\dots,n-i-1\} \cap \{s_1-i,\dots,s_k-i\}}$, where $S = \{s_1,\dots,s_k\}$. 
\end{lemma}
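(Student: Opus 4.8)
The plan is to follow the structure of the proof of Lemma~\ref{lem connectedpermutation}, organizing the count by the \emph{largest} forbidden prefix that is actually fixed so that the contributions are disjoint. When $S = \emptyset$ every permutation is $S$-connected, so $c_n^\emptyset = n!$, matching the formula with an empty sum; this also serves as the terminating case of the recursion, since each term on the right has strictly smaller first parameter as $i \ge 1$. For general $S$, I would begin with all $n!$ permutations of $\{1,\dots,n\}$ and subtract those that are \emph{not} $S$-connected, i.e.\ those fixing a prefix of size $j$ for at least one $j \in S$.

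To count the non-$S$-connected permutations without overcounting, I would assign to each such permutation $\sigma$ the largest index $i \in S$ for which $\sigma$ fixes the prefix of size $i$; every non-$S$-connected permutation then lies in exactly one class, indexed by $i \in S$. Fixing the prefix of size $i$ means $\sigma(\{1,\dots,i\}) = \{1,\dots,i\}$, so $\sigma$ decomposes as an arbitrary permutation of $\{1,\dots,i\}$, contributing a factor $i!$, together with a permutation $\tau$ of $\{i+1,\dots,n\}$, which I relabel as a permutation of $\{1,\dots,n-i\}$.

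The heart of the argument is translating the condition ``$i$ is the largest fixed prefix in $S$'' into a condition on $\tau$. Because $\{1,\dots,i\}$ is already fixed, for $j > i$ the permutation $\sigma$ fixes the prefix of size $j$ if and only if $\tau$ fixes the prefix of size $j - i$. Thus forbidding $\sigma$ from fixing any prefix of size $j \in S$ with $j > i$ is exactly forbidding $\tau$ from fixing any prefix of size $j - i$, i.e.\ requiring $\tau$ to be $S'$-connected for $S' = \{\, s - i : s \in S,\ s > i \,\}$. I would then check the bookkeeping identity $S' = \{1,\dots,n-i-1\} \cap \{s_1-i,\dots,s_k-i\}$, which holds since $s \in S \subseteq \{1,\dots,n-1\}$ makes $1 \le s - i \le n-i-1$ equivalent to $s > i$. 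This produces $c_{n-i}^{S'}$ choices for $\tau$, and summing $i!\,c_{n-i}^{S'}$ over $i \in S$ counts all non-$S$-connected permutations, giving the claimed recursion.

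The main point requiring care is the simultaneous correctness of the disjointness and of the shifted index set. Taking $i$ to be the largest fixed prefix guarantees that no permutation is counted for two values of $i$, while the fact that elements of $S$ with $s \le i$ impose no restriction (they concern prefixes inside the already-fixed block $\{1,\dots,i\}$, which is governed by the free $i!$ factor) is what ensures that precisely the shifted tail of $S$, and nothing more, survives as the constraint on $\tau$. The remaining verifications are routine.
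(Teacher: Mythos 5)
Your proposal is correct and follows essentially the same argument as the paper: subtract the non-$S$-connected permutations, classified disjointly by the maximal $i \in S$ whose prefix is fixed, with the $i!$ factor for the fixed block and the shifted index set governing the tail. Your explicit verification that $\{\,s-i : s\in S,\ s>i\,\}$ equals $\{1,\dots,n-i-1\}\cap\{s_1-i,\dots,s_k-i\}$ is a welcome bit of bookkeeping that the paper leaves implicit.
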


\begin{proof}
We count these by first taking all $n!$ permutations and then subtracting permutations that fix prefixes in $S$. We count the latter by summing, for each $i \in S$, the number of permutations that fix a prefix of length $i$ but do not fix a prefix in $S$ of length greater than $i$. For a fixed $i \in S$, the number of such permutations is given by multiplying the $i!$ ways to fix a prefix of length $i$ by the $c_{n-i}^{\{1,\dots,n-i-1\} \cap \{s_1-i,\dots,s_k-i\}}$ ways to map the rest of the elements so that no prefix in $S$ of length greater than $i$ is fixed.  Each permutation which is not S-connected is subtracted exactly once, namely in the term indexed by the maximal $i\in S$ for which the prefix of length $i$ is fixed.
\end{proof}

Note that when $S = \{1,\dots,n-1\}$, $c_n^S = c_n$, and for each $i \in S$, $c_{n-i}^{\{1,\dots,n-i-1\} \cap \{s_1-i,\dots,s_k-i\}} = c_{n-i}$. Hence Lemma ~\ref{lem connectedpermutation} is the special case of Lemma ~\ref{lem sconnectedpermutation} with $S = \{1,\dots,n-1\}$.

\section{Counting subdivergence-free gluings in special families}\label{sec families}

In this section, we count subdivergence-free gluings between trees in specific infinite families. We examine three different families. For each family, we will be able to use the definitions and results in Section ~\ref{sec perm} to derive closed-form expressions for the number of subdivergence-free gluings. The first family illustrates this connection to connected permutations explicitly. The second family introduces a root with multiple children. In this case there are multiple subsets of edges that can be cut to generate a subdivergence for a given gluing. The third family has an internal vertex with no leaves attached. Here there are multiple subsets of edges along the same branch that can generate a subdivergence for a given gluing.

A vertex of a rooted tree which is not a leaf is called an \emph{internal vertex} and an edge which is not incident to a leaf is called an \emph{internal edge}.

\subsection{Line Family}

We first consider families of trees where each node in the tree has at most one non-leaf child, or equivalently where the internal nodes form a path, beginning with the following special case.

\begin{definition}\label{def first_tree_family}
Let $\ell_k$ be the tree with $k$ internal vertices where the internal vertices form a path, the root is at one end, and there is one leaf attached to each internal vertex. See Figure ~\ref{fig:first_tree_family} for examples.
\end{definition}

\begin{figure}
\begin{subfigure}{.3\textwidth}
  \centering
  \includegraphics[]{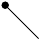}
  \caption{$\ell_1$}
  \label{fig:t_1}
\end{subfigure}%
\begin{subfigure}{.3\textwidth}
  \centering
  \includegraphics[]{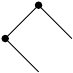}
  \caption{$\ell_2$}
  \label{fig:t_2}
\end{subfigure}%
\begin{subfigure}{.3\textwidth}
  \centering
  \includegraphics[]{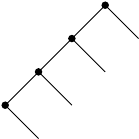}
  \caption{$\ell_4$}
  \label{fig:t_4}
\end{subfigure}
    \caption{Examples of trees $\ell_k$ in the line family.}
    \label{fig:first_tree_family}
\end{figure}

\begin{thm}\label{thm first_tree_family}
The number of subdivergence-free gluings of $\ell_k$ with itself is $n(\ell_k,\ell_k) = c_k$.
\end{thm}

\begin{proof}
Label the leaves of each $\ell_k$ from $1$ to $k$ by labelling the deepest leaf $1$ and then continuing up the tree, as in Figure ~\ref{fig:first_tree_family_labelled}. Then the subdivergence-free condition says that the permutation giving the gluing cannot fix any prefix of the labels of length $1$ through $k-1$.
\end{proof}

\begin{figure}
    \centering
    \includegraphics[]{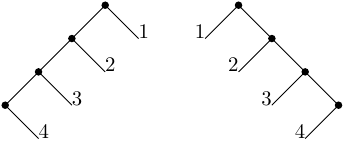}
    \caption{Labelling of $\ell_4$. Any gluing that fixes a prefix of length 1,2 or 3 creates a subdivergence.}
    \label{fig:first_tree_family_labelled}
\end{figure}

\begin{definition}\label{def complex_line_fam}
Let $S = \{s_1,\dots,s_j\} \subseteq \{1,\dots,k-1\}$, with $s_1 \leq s_2 \leq \dots s_j$. 
Let $\ell_{k,S}$ be the tree with $j+1$ internal vertices, where the internal vertices form a path with the root at one end, and where the $j-i$th internal vertex from the root has $s_{i+1}-s_i$ leaves, with the convention that $s_0=0$ and $s_{j+1} = k$.
See Figure ~\ref{fig:complex_line_family} for examples.
\end{definition}

The tree $\ell_{k,S}$ can also be built by the following recursive process. Start with the vertex with $s_1$ leaves, and call this vertex $1$. Recursively attach parent vertices with enough leaves that vertex $i$ has $s_i$ leaves below it, up to vertex $j$. Finally, attach the root vertex with enough leaves that the entire tree has $k$ leaves. 

\begin{figure}
\begin{subfigure}{.5\textwidth}
  \centering
  \includegraphics[]{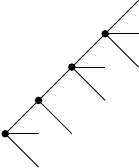}
  \caption{$\ell_{8,\{3,5,6\}}$}
  \label{fig:complex_line_fam}
\end{subfigure}%
\begin{subfigure}{.5\textwidth}
  \centering
  \includegraphics[]{first_tree_family2.pdf}
  \caption{$\ell_{4,\{1,2,3\}}$}
  \label{fig:complex_line_fam_basic}
\end{subfigure}
    \caption{More general examples of trees in the line family.}
    \label{fig:complex_line_family}
\end{figure}

\begin{thm}\label{thm complex_line_fam}
The number of subdivergence-free gluings of $\ell_{k, S_1}$ with $\ell_{k, S_2}$ is $n(\ell_{k,S_1},\ell_{k,S_2}) = c_{k}^{S_1 \cap S_2}$.
\end{thm}

\begin{proof}
Say $S_1 = \{s_1,s_2,\dots,s_j\}$ and $S_2 = \{q_1,q_2,\dots,q_i\}$. Label the leaves of $t_1=\ell_{k, S_1}$ from $1$ to $k$ by labelling the fan of $s_1$ leaves of the internal vertex farthest from the root from $1$ to $s_1$, and then continuing along the tree so that the vertex with $s_i$ leaves below it also has labels $1$ through $s_i$ below it. Label $t_2=\ell_{k, S_2}$ similarly. Then the subdivergence-free condition says that the permutation giving the gluing can not fix any prefix of the labels whose length is in $S_1 \cap S_2$. This is because the only way to get a subdivergence is to have all of the leaves below a vertex in $t_1$ joined to all of the leaves below a vertex in $t_2$, which is equivalent to fixing a prefix in $S_1 \cap S_2$.
\end{proof}

\subsection{Two-ended family}

\begin{definition}
Let $d_{(i,j)}$ be the tree formed by a vertex whose two children are $\ell_i$ and $\ell_j$ in the sense of Definition ~\ref{def first_tree_family}. See Figure ~\ref{fig:two_ended_family} for examples.
\end{definition}

\begin{figure}
\begin{subfigure}{.5\textwidth}
  \centering
  \includegraphics[]{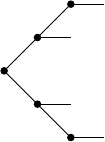}
  \caption{$d_{(2,2)}$}
  \label{fig:double_t22}
\end{subfigure}%
\begin{subfigure}{.5\textwidth}
  \centering
  \includegraphics[]{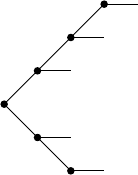}
  \caption{$d_{(3,2)}$}
  \label{fig:double_t32}
\end{subfigure}
    \caption{Examples of trees in the two-ended family.}
    \label{fig:two_ended_family}
\end{figure}

\begin{thm}\label{thm two_sided_same_length} 
The number of subdivergence-free gluings of $d_{(k, k)}$ with itself is
\[
n(d_{(k,k)},d_{(k,k)}) = (2k)! - 2\bigg(k!k! + \sum\limits_{\substack{1 \leq i \leq k-1 \\ 1 \leq j \leq k-1}} i!j!c_{2k-i-j} + 2(\sum_{i = 1}^{k-1} i!c_{2k-i})\bigg).
\]
\end{thm}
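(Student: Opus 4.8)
The plan is to reduce the count to one about permutations of $\{1,\dots,2k\}$, as in the proofs of Theorems~\ref{thm first_tree_family} and~\ref{thm complex_line_fam}, and then to count the gluings that \emph{do} contain a subdivergence. After forgetting the root, $d_{(k,k)}$ is a path of $2k+1$ internal vertices carrying $2k$ leaves, with the root the unique leafless vertex in the middle. Labelling the leaves $1,\dots,k$ down the left branch (deepest first) and $k+1,\dots,2k$ up the right branch, the leaf-sets split off by cutting a single non-leaf edge into a component avoiding the root are exactly the prefixes $\{1,\dots,m\}$ and the suffixes $\{2k-m+1,\dots,2k\}$ with $1\le m\le k$; no interior interval can be cut off. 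Writing a gluing as a permutation $\pi$ of $\{1,\dots,2k\}$, a subdivergence is then precisely a \emph{corner block} of size $m\le k$: some $m$ for which $\pi$ sends a size-$m$ prefix or suffix onto a size-$m$ prefix or suffix. So $n(d_{(k,k)},d_{(k,k)})$ counts $\pi\in S_{2k}$ with no corner block of size $\le k$, and I would compute the complementary number of bad permutations and subtract from $(2k)!$.

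The crux is a lemma letting ordinary connected permutations describe the ``middle'' of a bad permutation. Since $\pi(\{1,\dots,s\})=\{1,\dots,s\}$ exactly when $\pi(\{s+1,\dots,N\})=\{s+1,\dots,N\}$, a size-$s$ prefix-to-prefix block is the same event as a size-$(N-s)$ suffix-to-suffix block. I would use this to show that the number of permutations of $\{1,\dots,N\}$ with no prefix-to-prefix block of size $\le p$ and no suffix-to-suffix block of size $\le q$, where $p+q=N$, equals $c_N$: forbidding prefix blocks of sizes $[1,p]$ forbids suffix blocks of sizes $[q,N-1]$, which together with the suffix blocks of sizes $[1,q]$ forbidden directly rules out every proper suffix-to-suffix block, and such permutations are in bijection (by reversing positions and complementing values) with the connected permutations of Lemma~\ref{lem connectedpermutation}.

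With this lemma I would classify each bad $\pi$ by the largest corner block at the ``top'' (a prefix of $t_1$) and at the ``bottom'' (a suffix of $t_1$), recording the corner of each. Two structural facts organize the bookkeeping: blocks at the same end, or blocks mapping into the same branch of $t_2$, cannot coexist, so if both ends carry a block they lie in opposite corners; and a size-$k$ block at one end forces one at the other. This gives three cases. If both maximal blocks have size $k$ we have the full-branch gluings, $2\,k!\,k!$ of them (two pairings of the branches, $k!$ inside each). If both ends carry a proper block of sizes $i,j\in\{1,\dots,k-1\}$, peeling them off in $i!\,j!$ ways leaves a middle permutation of $\{1,\dots,2k-i-j\}$ that must avoid prefix blocks of size $\le k-i$ and suffix blocks of size $\le k-j$; as $(k-i)+(k-j)=2k-i-j$ the lemma counts these as $c_{2k-i-j}$, contributing $2\sum_{i,j}i!\,j!\,c_{2k-i-j}$ over the two corner orientations. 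If exactly one end carries a (necessarily proper) block of size $i$, the middle of size $2k-i$ must avoid prefix blocks of size $\le k-i$ and \emph{all} admissible suffix blocks (size $\le k$); since $(k-i)+k=2k-i$ the lemma again gives $c_{2k-i}$, and summing over the two ends and two orientations contributes $4\sum_i i!\,c_{2k-i}$.

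Adding the three cases gives the bracket times $2$, and subtracting from $(2k)!$ yields the formula. The main obstacle is proving the lemma of the second paragraph and then verifying that the maximal-block classification is a genuine partition of the bad permutations: one must check that the ``no larger block'' requirements translate exactly into the stated prefix/suffix avoidance ranges, that these ranges always satisfy $p+q=N$ so the lemma applies, and that the opposite-corner and full-branch constraints prevent any permutation from being counted in two cases. It is precisely the complementary-block identity that makes the one-sided numbers $c_n$ appear, rather than a genuinely two-ended analogue.
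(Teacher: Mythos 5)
Your proposal is correct and follows essentially the same route as the paper: you classify the bad gluings by the maximal subdivergence block at each end of the tree, obtaining the same three contributions $2\,k!\,k!$, $2\sum i!\,j!\,c_{2k-i-j}$, and $4\sum i!\,c_{2k-i}$. Your lemma on forbidding prefix blocks of size $\le p$ and suffix blocks of size $\le q$ with $p+q=N$ is just a cleaner, explicit formulation of the complementary prefix/suffix observation the paper makes parenthetically to justify the appearance of the ordinary connected permutation numbers $c_n$.
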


\begin{proof}
Order the leaves of the trees beginning at the end of one $\ell_k$ and ending at the end of the other $\ell_k$.

There are $(2k)!$ gluings in total. We subtract gluings that result in subdivergences. A simple way to get a subdivergence is to fix the first $k$ leaves of each tree together and the last $k$ leaves of each tree together. There are $k!k!$ ways to do this. Likewise, there are $k!k!$ ways of gluing the first $k$ leaves of the first to the last $k$ leaves of the second tree and the last $k$ leaves of the first tree to the first $k$ leaves of the second tree. This accounts for all gluings with subdivergences resulting from cutting either pair of edges nearest the root.

Next we count ways to fix the first $k-1$ leaves of the two trees together and last $k-1$ leaves of the two trees together and fix no prefix of the middle two leaves.  There are $(k-1)!(k-1)!c_2$ ways to do this. None of these have been already counted, since cutting the edges nearest the root would not result in a subdivergence. Then we count ways to fix the first $k-1$ leaves of the two trees together and the last $k-2$ leaves of the two trees together, and fix no prefix of the middle three leaves (If a prefix of length one is fixed, so is a suffix of length two and we are in the k!k! case. If a prefix of length two is fixed, so is a suffix of length one and we are in the (k-1)!(k-1)! case). There are $(k-1)!(k-2)!c_3$ ways to do this.  Similarly, there are $i!j!c_{2k-i-j}$ ways to glue the first $i$ leaves of the two trees together and the last $j$ leaves of the two trees together, but no larger prefix on either side, contributing \[\sum\limits_{\substack{1 \leq i \leq k-1 \\ 1 \leq j \leq k-1}} i!j!c_{2k-i-j}\] gluings with subdivergences.  We get the same number of subdivergent gluings where a prefix of the first tree is glued to a suffix of the second tree and a suffix of the second tree is glued to a prefix of the first tree.
We have now accounted for all gluings with two possible places to generate subdivergences, one on each branch of the tree.


We also count ways to have only one subdivergence cut. There are $k-1$ places to cut on each branch. If we cut after leaf $i$ for $1 \leq i \leq k-1$, the first $i$ leaves can be fixed in any way, and the remaining leaves can be fixed so long as they do not allow a subdivergence, which happens so long as they do not fix any prefixes. Overall, this contributes $\sum_{i = 1}^{k-1} i!c_{2k-i}$ gluings with subdivergences. We can do the same with gluing the last leaves to themselves, generating the same number of trees with subdivergences, and likewise gluing a prefix of the first tree to a suffix of the second tree or vice versa.  This gives the number in the theorem statement.

\end{proof}

\begin{thm}
When $k \neq l$, the number of subdivergence-free gluings of $d_{(k,l)}$ with itself is
\[
n(d_{(k,l)},d_{(k,l)}) = (k+l)! - A - B
\]
where 
\begin{align*}
    A =& k!l! + \sum\limits_{\substack{1 \leq i \leq k-1 \\ 1 \leq j \leq l-1}} i!j!c_{k+l-i-j} + \sum_{i=1}^{k-1}i!c_{k+l-i} + \sum_{j=1}^{l-1}j!c_{k+l-j}, \\
    B =& \sum\limits_{\substack{1 \leq i \leq a \\ 1 \leq j \leq a}}i!j!c_{k+l-i-j}^{S_1} + 2\sum_{i=1}^{a}i!c_{k+l-i}^{S_2},\\
    a =& \min(k,l), \\
    S_1 =& \{1,\dots,a-i\}\cup\{k+l-i-j-(a-j),\dots,k+l-i-j-1\}, and \\
    S_2 =& \{1,\dots,a-i\}\cup\{k+l-i-a,\dots,k+l-i-1\}
\end{align*}    
\end{thm}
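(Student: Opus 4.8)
The plan is to follow the strategy of Theorem~\ref{thm two_sided_same_length} but to track the two branch lengths separately. First I would fix a labelling of the $k+l$ leaves of each copy of $d_{(k,l)}$: read positions $1,\dots,k$ up the $\ell_k$ branch (deepest leaf first) and positions $k+1,\dots,k+l$ down the $\ell_l$ branch (shallowest leaf first), exactly as in the two-ended ordering. The subtrees of $d_{(k,l)}$ separable by cutting a single internal edge are then precisely the bottom $i$ leaves of the $\ell_k$ branch (a prefix $\{1,\dots,i\}$, $1\le i\le k$) and the bottom $j$ leaves of the $\ell_l$ branch (a suffix $\{k+l-j+1,\dots,k+l\}$, $1\le j\le l$). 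Writing the gluing as a permutation $\sigma$ of $\{1,\dots,k+l\}$, a subdivergence is exactly the statement that $\sigma$ carries the leaf set of one such subtree of $t_1$ onto that of one of $t_2$, giving four event types: (a) $\sigma(\{1,\dots,i\})=\{1,\dots,i\}$, (b) $\sigma$ fixes a suffix of size $j$, (c) $\sigma(\{1,\dots,i\})=\{k+l-i+1,\dots,k+l\}$, and (d) $\sigma$ sends a suffix of size $j$ to the prefix $\{1,\dots,j\}$. Events (c) and (d) require size at most $a=\min(k,l)$, since the image must fit inside the opposite branch.

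The heart of the argument is a structural lemma: no single gluing can simultaneously exhibit a \emph{same-branch} subdivergence (type (a) or (b)) and a \emph{cross-branch} one (type (c) or (d)). I would prove this by a nesting argument: if (a) holds at size $i$ and (c) at size $i'$, then one of $\{1,\dots,i\}$, $\{1,\dots,i'\}$ contains the other, so by injectivity the images $\{1,\dots,i\}$ and $\{k+l-i'+1,\dots,k+l\}$ are nested; but they are disjoint (their sizes sum to at most $k+l$), which is impossible. The combinations (a)/(d), (b)/(c), (b)/(d) follow from the same disjointness-of-images observation. Consequently the gluings with a subdivergence split into two disjoint classes, and I would let $A$ count those with a same-branch subdivergence and $B$ those with a cross-branch one, so that $n(d_{(k,l)},d_{(k,l)})=(k+l)!-A-B$.

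To obtain $A$ I would classify each same-branch-bad gluing by the pair $(p,q)$, where $p$ is the largest fixed-prefix size and $q$ the largest fixed-suffix size. Complementation gives $p=k \iff q=l$, so the block-diagonal case $p=k$ (contributing $k!\,l!$) is isolated, and otherwise $p\le k-1$ and $q\le l-1$. After fixing the bottom $i$ leaves of the $\ell_k$ branch and bottom $j$ leaves of the $\ell_l$ branch, the middle block of size $k+l-i-j$ must be a connected permutation: any fixed prefix of the middle is, through the already-fixed blocks and complementation, either a larger fixed prefix on the $\ell_k$ side or a larger fixed suffix on the $\ell_l$ side, both excluded by maximality. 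Summing the $i!\,j!$ (or $i!$) ways to fix the blocks times the number of connected middles over the four regions $p=k$; $1\le p\le k-1$ and $1\le q\le l-1$; $1\le p\le k-1$ and $q=0$; and $p=0$ and $1\le q\le l-1$ reproduces the four terms of $A$ exactly.

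For $B$ the same bookkeeping applies after reversing the codomain labels, $x\mapsto k+l+1-x$: this turns every cross-branch event into a prefix- or suffix-fix and effectively glues $d_{(k,l)}$ to $d_{(l,k)}$, so I would again classify by the maximal cross-prefix size $i$ and cross-suffix size $j$ (each at most $a$), fix those blocks, and count middles. The step I expect to be the main obstacle is that, because the reversed branch lengths no longer match the domain's, a fixed prefix of the middle block of size $r$ corresponds to a \emph{genuine} further cross-subdivergence only for certain $r$: extending the cross-prefix keeps it inside a subtree only while its total size $i+r\le a$, forcing the forbidden range $\{1,\dots,a-i\}$, whereas the complementary cross-suffix, of size $k+l-i-r$, is genuine only for sizes $\le a$ exceeding $j$, forcing $\{k+l-i-a,\dots,k+l-i-j-1\}$ (which equals the second block of the stated $S_1$); sizes in the gap create no subtree-to-subtree identification and must be allowed. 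This is exactly why plain connected permutations are replaced by $S$-connected ones, with $S_1$ for the two-sided term and with $S_2$ arising as the $j=0$ specialization (the factor $2$ accounting for the (c)-only and (d)-only cases), while the mutual-exclusivity lemma removes any need to forbid same-branch events inside the middle. Carefully verifying that $S_1$ and $S_2$ match the statement is the delicate part; the remaining summations are then routine, and a useful consistency check is that at $k=l$ the formula collapses to Theorem~\ref{thm two_sided_same_length}.
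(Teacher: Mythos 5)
Your proposal is correct and follows essentially the same route as the paper: split the subdivergent gluings into the same-branch class (counted by $A$) and the cross-branch class (counted by $B$), classify each by the maximal fixed prefix and suffix, and count the middle blocks as ($S$-)connected permutations with exactly the sets $S_1$, $S_2$ of the statement. Your explicit mutual-exclusivity lemma showing the two classes are disjoint makes precise something the paper leaves implicit, but it is a refinement of, not a departure from, the paper's argument.
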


\begin{proof}
The expression $A$ is derived in the same way as in the proof of Theorem ~\ref{thm two_sided_same_length}, looking at subdivergences arising from gluing the end of the $k$ leaf branches of both trees together or the $l$ leaf branches of both trees together. The expression $B$ is derived by considering subdivergences arising from gluing the ends of the $k$ leaf branches to the ends of the $l$ leaf branches, as illustrated in Figure ~\ref{fig:double_lopsided_2}. 

We first count gluings with two possible ways to generate subdivergences. The simplest is to fix the first $a := \min(k,l)$ leaves together and last $a$ leaves together, with no smaller subdivergence within either of these parts of the gluing, and allowing the middle leaves to be glued in any order. This amounts to $a!a!(k+l-2a)!$, or $a!a!c_{k+l-2a}^{\emptyset}$ gluings. Next we consider fixing the first $a-1$ leaves together, and the last $a$ leaves together. The middle edges must be glued without fixing a prefix of length 1, as this has already been counted in the $a!a!$ case just considered. Thus there are $(a-1)!a!c_{k+l-2a+1}^{\{1\}}$ gluings. Fixing the first $a$ leaves together and the last $a-1$ leaves is similar, but now we must avoid fixing a suffix of length 1, giving $a!(a-1)!c_{k+l-2a+1}^{\{k+l-2a\}}$ gluings. This continues down to having just the first leaf fixed to itself and the last leaf fixed to itself. This generates $\sum\limits_{\substack{1 \leq i \leq a \\ 1 \leq j \leq a}}i!j!c_{k+l-i-j}^{S_1}$ gluings, with $S_1$ as in the theorem statement.

We also count ways with a subdivergence on only one side. We first consider ways when the top ends of the trees are glued together, and the other case is symmetric. There are $a$ places to cut, which result from gluing the first $i$ leaves together for $1 \leq i \leq a$. So long as the trees have no suffix of length $1$ through $a$ fixed, there will be no subdivergences on the other branches of the trees. As well, when we are counting gluings that fix the first $i$ leaves, if we count gluings that do not fix prefixes of the remaining leaves of length $1$ through $a-i$, we will only count each gluing once. This generates $2\sum_{i=1}^{a}i!c_{k+l-i}^{S_2}$ gluings, with $S_2$ as in the theorem statement.
\end{proof}

\begin{figure}
\begin{subfigure}{.5\textwidth}
  \centering
  \includegraphics[]{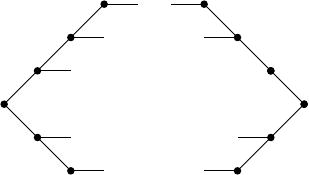}
  \caption{}
  \label{fig:double_lopsided_1}
\end{subfigure}%
\begin{subfigure}{.5\textwidth}
  \centering
  \includegraphics[]{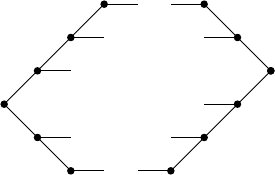}
  \caption{}
  \label{fig:double_lopsided_2}
\end{subfigure}
    \caption{Two ways of gluing to get subdivergences.}
    \label{fig:double_lopsided_family}
\end{figure}

\subsection{Lines with an extra fan family}

\begin{definition}
For $1 < i < k$, let $f_{k,i}$ be the tree with $k$ leaves formed in the same way as $\ell_k$ in Definition ~\ref{def first_tree_family}, but where the leaf edge at the $i$th deepest vertex is replaced by an edge connected to a vertex with a single leaf as a child. We will refer to this added internal edge as the \emph{extra edge}. See Figure ~\ref{fig:third_tree_family} for an example.
\end{definition}

\begin{figure}
  \centering
  \includegraphics[]{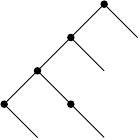}
  \caption{Example of $f_{4,3}$}
  \label{fig:third_tree_family}
\end{figure}

\begin{thm}\label{thm third_tree_family}
The number of subdivergence-free gluings of $f_{k,i}$ with itself is
$n(f_{k,i},f_{k,i}) = c_k - 2c_{k-1}^{\{i-1,\dots,k-2\}} + c_{k-2}^{\{i-2,\dots,k-3\}} - c_{k-1}.$
\end{thm}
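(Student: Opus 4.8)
The plan is to translate the subdivergence-free condition into a statement about the gluing permutation $\sigma$, exactly as in the proofs of Theorems~\ref{thm first_tree_family} and~\ref{thm complex_line_fam}, and then apply inclusion--exclusion together with Lemma~\ref{lem sconnectedpermutation}. First I would list the internal edges of $f_{k,i}$: the $k-1$ edges of the main path, together with the single \emph{extra edge} joining the $i$th deepest path vertex to the added internal vertex. I would label the $k$ leaves $1,\dots,k$ from the deepest upward, placing the label $i$ on the leaf hanging below the extra edge, so that the set of leaves below the $j$th path edge is the prefix $\{1,\dots,j\}$ for $1\le j\le k-1$, while the set of leaves below the extra edge is the singleton $\{i\}$. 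A $2$-edge cut giving a subdivergence must use one internal edge from each tree whose below-sets agree under $\sigma$. Matching a path edge to a path edge forces $\sigma$ to fix some prefix of size $1,\dots,k-1$; matching the size-$1$ prefix against the extra edge forces $\sigma(1)=i$ or $\sigma(i)=1$; and matching the extra edge to the extra edge forces $\sigma(i)=i$. Hence a gluing is subdivergence-free iff $\sigma$ lies in the set $C$ of connected permutations of $\{1,\dots,k\}$ and avoids the three events $A_1=\{\sigma(1)=i\}$, $A_2=\{\sigma(i)=1\}$, $A_3=\{\sigma(i)=i\}$, each viewed as a subset of $C$.

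Starting from the $c_k=|C|$ connected permutations, I would remove the bad ones by inclusion--exclusion over $A_1,A_2,A_3$, so $n(f_{k,i},f_{k,i})=c_k-|A_1\cup A_2\cup A_3|$. Because $\sigma$ is a bijection and $i\neq 1$, the event $A_3$ is disjoint from both $A_1$ and $A_2$ (and all triple intersections vanish), so the only surviving intersection term is $A_1\cap A_2$, giving $c_k-|A_1|-|A_2|-|A_3|+|A_1\cap A_2|$. The map $\sigma\mapsto\sigma^{-1}$ preserves connectedness and interchanges $A_1$ and $A_2$, so $|A_1|=|A_2|$.

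The heart of the argument is evaluating these counts as $S$-connected permutation numbers. For $A_3$, deleting the fixed point $i$ and order-relabeling yields a permutation of $\{1,\dots,k-1\}$; tracking prefixes shows $\sigma$ fixes a prefix of size $\le k-1$ iff the reduced permutation fixes a prefix of size $\le k-2$, so $|A_3|=c_{k-1}$. For $A_1$ (and symmetrically $A_2$), since $\sigma(1)=i$ no prefix of size $<i$ can be fixed, and deleting the row and column of the forced entry and relabeling shows that $\sigma$ fixes a prefix of size $j\in\{i,\dots,k-1\}$ iff the reduced permutation of $\{1,\dots,k-1\}$ fixes a prefix of size $j-1\in\{i-1,\dots,k-2\}$; by Lemma~\ref{lem sconnectedpermutation} this count is $c_{k-1}^{\{i-1,\dots,k-2\}}$. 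For $A_1\cap A_2$ the values $1$ and $i$ are swapped, and deleting both forced entries and relabeling the remaining $k-2$ points shows $\sigma$ fixes a prefix of size $j\in\{i,\dots,k-1\}$ iff the reduced permutation fixes a prefix of size $j-2\in\{i-2,\dots,k-3\}$, giving $c_{k-2}^{\{i-2,\dots,k-3\}}$. Substituting these into the inclusion--exclusion formula yields $c_k-2c_{k-1}^{\{i-1,\dots,k-2\}}+c_{k-2}^{\{i-2,\dots,k-3\}}-c_{k-1}$, as claimed. The main obstacle is the bookkeeping in these three reductions: one must check carefully that the index shift from deleting rows and columns sends the relevant window of forbidden prefix sizes exactly onto the advertised sets $\{i-1,\dots,k-2\}$ and $\{i-2,\dots,k-3\}$, and that the forced non-fixing of small prefixes removes precisely the indices below $i$, so that nothing is over- or under-counted.
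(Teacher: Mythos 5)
Your proposal is correct and follows essentially the same route as the paper: both reduce the problem to counting connected permutations of $\{1,\dots,k\}$ avoiding the three events $\sigma(1)=i$, $\sigma(i)=1$, $\sigma(i)=i$ (the paper phrases this as contracting the extra edges and then correcting for subdivergences that use them), and both evaluate the resulting inclusion--exclusion terms as $c_{k-1}^{\{i-1,\dots,k-2\}}$, $c_{k-2}^{\{i-2,\dots,k-3\}}$, and $c_{k-1}$ by the same deletion-and-relabelling bookkeeping. Your explicit permutation-level translation of the cut conditions is a slightly more formal rendering of the paper's tree-level argument, but the decomposition and all four terms are identical.
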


\begin{figure}
\begin{subfigure}{.5\textwidth}
  \centering
  \includegraphics[]{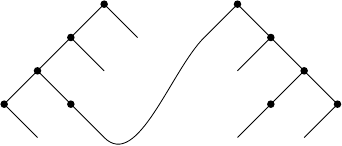}
  \caption{}
  \label{fig:clair_first_counting_case}
\end{subfigure}%
\begin{subfigure}{.5\textwidth}
  \centering
  \includegraphics[]{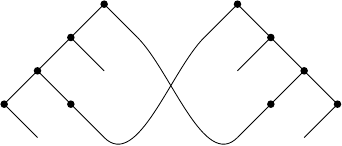}
  \caption{}
  \label{fig:clair_second_counting_case}
\end{subfigure}%

\begin{subfigure}{.3\textwidth}
  \centering
  \includegraphics[]{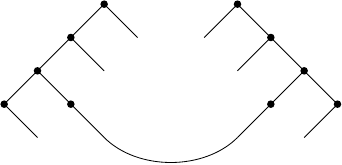}
  \caption{}
  \label{fig:clair_third_counting_case}
\end{subfigure}%
    \caption{Inclusion and exclusion cases for counting third tree family.}
    \label{fig:counting_third_tree_family}
\end{figure}

\begin{proof}
We count $n(f_{k,i},f_{k,i})$ with an inclusion-exclusion approach, contracting the extra edge in each tree to simplify the counting. We will refer to the two copies of $f_{k,i}$ as $t_1$ and $t_2$, the extra edges of $t_1$ and $t_2$ as $e_1$ and $e_2$, respectively, and the trees formed from contracting the extra edges as $t_1 / e_1$ and $t_2 / e_2$. We will first consider $n(t_1 / e_1, t_2 / e_2)$. Since $t_1 / e_1$ and $t_2 / e_2$ are both just $l_k$ in the sense of Definition ~\ref{def first_tree_family}, this amounts to $c_k$ by Theorem ~\ref{thm first_tree_family}. $n(t_1 / e_1, t_2 / e_2)$ over counts $n(t_{k,i},t_{k,i})$ as it does not consider subdivergences from cutting $e_1$ or $e_2$. Such subdivergences can occur either when the leaf after $e_1$ is connected to the end of $t_2$, the leaf after $e_2$ is connected to the end of $t_1$, or the leaf after $e_2$ is connected to the leaf after $e_1$. Thus we must subtract the number of gluings counted in $n(t_1 / e_1, t_2 / e_2)$ that contain these subdivergences.

We first count the number of gluings with subdivergences from either joining the leaf after $e_1$ to the end of $t_2$, the leaf after $e_2$ to the end of $t_1$, or both. If the leaf after $e_1$ is joined to the end of $t_2$, as in Figure ~\ref{fig:clair_first_counting_case}, there are $k-1$ leaves left on each tree, and any gluing of these that fixes a prefix of length $i-1$ or more would have already been counted as a subdivergence in $n(t_1 / e_1, t_2 / e_2)$. There are thus $c_{k-1}^{\{i-1,\dots,k-2\}}$ gluings with subdivergences from joining the leaf after $e_1$ to the end of $t_2$ which have not already been accounted for. Similarly, there are $c_{k-1}^{\{i-1,\dots,k-2\}}$ gluings with subdivergences from joining the leaf after $e_2$ to the end of $t_1$. However, this double counts the gluings with subdivergences from joining both the leaf after $e_1$ to the end of $t_2$ and the leaf after $e_2$ to the end of $t_1$, as in Figure ~\ref{fig:clair_second_counting_case}, so we must subtract such gluings. Assuming that these leaves are joined, there are $k-2$ leaves left on each tree, and any gluing of these that fixes a prefix of length $i-2$ or more would have already been counted as a subdivergence in $n(t_1 / e_1, t_2 / e_2)$. Thus we must subtract $c_{k-2}^{\{i-2,\dots,k-3\}}$ gluings. In summary, the total number of gluings to subtract from $n(t_1 / e_1, t_2 / e_2)$ to account for subdivergences from joining the leaf after $e_1$ to the end of $t_2$, the leaf after $e_2$ to the end of $t_1$, or both, is $2c_{k-1}^{\{i-1,\dots,k-2\}} - c_{k-2}^{\{i-2,\dots,k-3\}}$.

Finally, the number of gluings to remove with subdivergences from joining the leaf after $e_1$ to the leaf after $e_2$ is just those that join these leaves but do not have any other subdivergences, as in Figure ~\ref{fig:clair_third_counting_case}. There are $k-1$ leaves remaining, forming a structure like $l_{k-1}$ in the sense of Definition ~\ref{def first_tree_family}, so the number of gluings here is $c_{k-1}$ by Theorem ~\ref{thm first_tree_family}.
\end{proof}

\begin{definition}
For $1 < i < k$ and $j \geq 1$, let $f_{k,i,j}$ be the tree formed by starting with $f_{k,i}$ and adding $j-1$ more leaves to both the deepest internal vertex, and to the vertex below the extra edge. See Figure ~\ref{fig:bigfans} for an example.
\end{definition}

\begin{figure}
  \centering
  \includegraphics[]{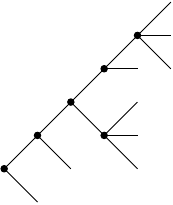}
  \caption{Example of $f_{5,3,3}$}
  \label{fig:bigfans}
\end{figure}

\begin{thm}\label{thm bigfans}
The number of subdivergence-free gluings of $f_{k,i,j}$ with itself is
\begin{align*}
    n(f_{k,i,j},f_{k,i,j}) =& c_{k+2(j-1)}^{\{j,\dots,i+j-2,i+2(j-1),\dots,k+2(j-1)-1\}}
- 2j!c_{k+j-2}^{\{i+j-2,\dots,k+j-3\}} \\
& + j!j!c_{k-2}^{\{i-2,\dots,k-3\}}
- j!c_{k+j-2}^{\{j,\dots,k+j-3\}}.
\end{align*}
\end{thm}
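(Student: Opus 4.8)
The plan is to mirror the inclusion--exclusion proof of Theorem~\ref{thm third_tree_family}, contracting the two extra edges and correcting for the subdivergences they produce, but now carrying along the two fans of $j$ leaves. Write $t_1,t_2$ for the two copies of $f_{k,i,j}$, with extra edges $e_1,e_2$.

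First I would contract $e_1$ and $e_2$. Each quotient is a line-family tree in which the deepest vertex carries $j$ leaves, the $i$th deepest vertex carries $j$ leaves (the $j$ leaves below the extra edge having merged into it), and every other internal vertex carries a single leaf. Reading off the cumulative leaf counts at the non-root vertices identifies this tree as $\ell_{k+2(j-1),S}$ in the sense of Definition~\ref{def complex_line_fam}, with $S=\{j,\dots,i+j-2\}\cup\{i+2(j-1),\dots,k+2(j-1)-1\}$. By Theorem~\ref{thm complex_line_fam} the count for this pair is $c_{k+2(j-1)}^{S}$, the first term. As in Theorem~\ref{thm third_tree_family}, this overcounts $n(f_{k,i,j},f_{k,i,j})$ precisely by the gluings that are subdivergence-free in the contracted trees but acquire a subdivergence from a $2$-edge cut using $e_1$ or $e_2$, and these are what I would subtract.

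Next I would classify those cuts. Cutting $e_1$ isolates the block of $j$ leaves below it, so for a subdivergence the matched edge of $t_2$ must isolate exactly $j$ leaves; since the cumulative counts strictly increase along the path, the only such edges of $t_2$ are the edge above its deepest vertex and its extra edge $e_2$. This yields three events: (A) the $e_1$-block of $t_1$ glued onto the deepest fan of $t_2$; (A$'$) the symmetric event; and (C) the $e_1$-block glued onto the $e_2$-block. In each case the two matched fans are unsubdivided, so by the fan observation of Section~\ref{sec initial} any of the $j!$ bijections between them is allowed and imposes no further constraint. For each event I would then delete the matched fans, relabel, and read off via Lemma~\ref{lem sconnectedpermutation} which remaining prefixes still correspond to an $S$-prefix of the contracted tree. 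In event (A) the prefixes below the $e_1$-block can never be fixed once that block is sent to the bottom of $t_2$, so only the upper run of $S$ survives, shifted down by $j$, giving $j!\,c_{k+j-2}^{\{i+j-2,\dots,k+j-3\}}$; event (A$'$) is identical. Their intersection (B) removes $2j$ leaves and yields $j!\,j!\,c_{k-2}^{\{i-2,\dots,k-3\}}$, to be added back. Event (C) sends the $e_1$-block of $t_1$ to the $e_2$-block of $t_2$; since $i>1$ the $t_2$-images in (A),(A$'$),(C) are disjoint, so (C) is disjoint from (A),(A$'$) and is simply subtracted, and there vertex $i$ loses all its leaves and is suppressed, leaving the surviving prefixes $\{j,\dots,k+j-3\}$ and the count $j!\,c_{k+j-2}^{\{j,\dots,k+j-3\}}$. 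Assembling $\text{(main)}-(|A|+|A'|-|B|)-|C|$ then gives the four terms of the statement.

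I expect the bookkeeping of the forbidden sets to be the main obstacle. One must check carefully that, after deleting a length-$j$ block from the interior (events A, A$'$) or from both ends (B) and relabeling, the prefixes coming from the lower run $\{j,\dots,i+j-2\}$ of $S$ are automatically unfixable while those from the upper run $\{i+2(j-1),\dots,k+2(j-1)-1\}$ reappear shifted by $j$ (or $2j$). Getting these shifts and endpoints exactly right, together with the disjointness of (C) from (A),(A$'$) that lets us avoid a more elaborate inclusion--exclusion, are the points most prone to off-by-$j$ errors; a useful sanity check is that setting $j=1$ recovers Theorem~\ref{thm third_tree_family}.
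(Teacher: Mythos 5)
Your proposal is correct and follows essentially the same route as the paper's proof: contract the extra edges to reduce to $\ell_{k+2(j-1),S}$ and Theorem~\ref{thm complex_line_fam}, then correct by inclusion--exclusion over the three ways a cut through $e_1$ or $e_2$ can create a subdivergence (fan-to-bottom on either side, their overlap, and fan-to-fan), yielding exactly the four stated terms. Your additional checks --- that the lower run of $S$ becomes unfixable in events (A), (A$'$) while the upper run shifts by $j$ (or $2j$), and that (C) is disjoint from (A), (A$'$) --- are consistent with, and slightly more explicit than, the paper's argument.
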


\begin{proof}
We use a similar inclusion-exclusion approach to the proof of Theorem ~\ref{thm third_tree_family}. Again, we will refer to the two copies of $f_{k,i,j}$ as $t_1$ and $t_2$, the extra edges of $t_1$ and $t_2$ as $e_1$ and $e_2$, respectively, and the trees formed from contracting the extra edges as $t_1/e_1$ and $t_2/e_2$. Again, we first calculate $n(t_1/e_1,t_2/e_2)$, and subtract gluings that would result in subdivergences from cutting $e_1$ or $e_2$. Since $t_1/e_1$ and $t_2/e_2$ are both just $l_{k+2(j-1),\{j,\dots,i+j-2\,i+2(j-1),\dots,k+2(j-1)-1\}}$ in the sense of Definition ~\ref{def complex_line_fam}, it follows by Theorem ~\ref{thm complex_line_fam} that \\ $n(t_1/e_1,t_2/e_2) = c_{k+2(j-1)}^{\{j,\dots,i-1+j-1,i+2(j-1),\dots,k+2(j-1)-1\}}$.

Now we count the number of gluings with subdivergences from joining the fan after $e_1$ to the fan at the end of $t_2$. If we assume that these fans are joined, then there are $k-1+j-1$ leaves remaining, and any gluing of these that fixes a prefix of length $i+j-2$ or more would have already been counted as a subdivergence in $n(t_1/e_1,t_2/e_2)$. Thus there are $j!$ ways to join the fans, and $c_{k+j-2}^{\{i+j-2,\dots,k+j-3\}}$ ways to join the remaining leaves so that there is no subdivergence. There are the same number of gluings with subdivergences from joining the fan after $e_2$ to the end of $t_1$. This double counts gluings with subdivergences from joining both the fan after $e_1$ to the end of $t_2$ and the fan after $e_2$ to the end of $t_1$, so we must subtract them. Assume these fans are joined in any of the $j!j!$ possible ways. There are then $k-2$ remaining leaves on each tree, and any gluing of these that fixes a prefix of length $i-2$ or larger would have already been counted as a subdivergence in $n(t_1/e_1,t_2/e_2)$. Thus there are $j!j!c_{k-2}^{\{i-2,\dots,k-3\}}$ gluings that we have double counted here. In summary, the total number of gluings to subtract from $n(t_1/e_1,t_2/e_2)$ to account for subdivergences from joining the fan after $e_1$ to the end of $t_2$, the fan after $e_2$ to the end of $t_1$, or both, is $2j!c_{k+j-2}^{\{i+j-2,\dots,k+j-3\}} - j!j!c_{k-2}^{\{i-2,\dots,k-3\}}$.

Finally, the number of gluings to remove with subdivergences from joining the fan after $e_1$ to the fan after $e_2$ is just those that join these fans but do not have any other subdivergences. If these fans are joined in any of the $j!$ possible ways, there are $k+j-2$ leaves remaining on each tree, that form a structure like $l_{k+j-2,\{j,\dots,k+j-3\}}$ in the sense of Definition ~\ref{def complex_line_fam}, so the number of gluings here is $c_{k+j-2}^{\{j,\dots,k+j-3\}}$ by Theorem ~\ref{thm complex_line_fam}. So once we remove these $j!c_{k+j-2}^{\{j,\dots,k+j-3\}}$ over counted gluings from what we had thus far, we arrive at $n(t_1,t_2)$.
\end{proof}

Note that Theorem ~\ref{thm third_tree_family} is a special case of Theorem ~\ref{thm bigfans}, in the case where $j=1$.



\section{Algorithms}\label{sec algs}

This section presents two algorithms for computing subdivergence-free gluings of arbitrary pairs of trees.  

\subsection{Eating away children of the root}

The first algorithm will be presented in the form of recursive formulas for $n(t_1, t_2)$ which inductively determine $n(t_1, t_2)$ for any pair of rooted trees $t_1$ and $t_2$.  These formulas directly determine a recursive algorithm given by simply following the formulas.  A Sage implementation of this algorithm is included with the source of the arXiv version of this paper \cite{this}, but the strength of this algorithm is the recursive reformulation of $n(t_1, t_2)$, more than the algorithm itself on account of its performance.

For this algorithm we will need a variant on the notation of a subdivergence.  For this subsection only what we have previously called a subdivergence in a gluing of two trees will be called a \emph{fully internal subdivergence}, while the new notion will be known as a \emph{one-sided subdivergence}.  
The motivation for this language is that for subdivergences as we have dealt with them so far, we cut two internal edges in order to cut out a subdivergence that contains neither root.  That is, speaking physically, the subdivergence has none of the external edges of the glued graph, rather the external edges of the subdivergence both result from the cut edges.  These subdivergences are in this sense \emph{fully internal}.  As discussed in Section~\ref{sec phys mot}, it is also possible from a physical perspective to have a subdivergence where one external edge is an external edge of the original graph and the other results from a cut edge, and these we call \emph{one-sided}.  One-sided propagator subdivergences only occur when the graph has a bridge, and by a Legendre transform we can ignore this situation in general, but for the purposes of this algorithm, we will have partial gluings of trees, and these partial gluings may result in bridges and hence in one-sided subdivergences.  Note that unglued leaves also count as external edges and so are not allowed in the one-sided subdivergences.  Formal definitions are given in what follows.

\medskip

Let $t_1$ and $t_2$ be rooted trees as above, but not necessarily with the same number of leaves.  Let $l(t)$ be the number of leaves in the rooted tree $t$ and $L(t)$ the set of leaves of $t$.  We will need to keep track of partial gluings of leaves of $t_1$ and $t_2$, so given a function $\tau$ which is a bijection between a subset of the leaves of $t_1$ and a subset of leaves of $t_2$, we can form the graph $g_\tau(t_1, t_2)$ by identifying leaves according to $\tau$ in the same way as for the tree gluings we have been working with so far.  Call $\tau$ a \emph{partial gluing} and let the size of $\tau$ be the size of the image of $\tau$.

There is a \emph{fully internal subdivergence} of $g_\tau(t_1, t_2)$ if there are two edges $e_1$ and $e_2$, where $e_i\in t_i$ and neither $e_i$ is incident to a leaf of $t_i$, and where $e_1, e_2$ is a 2-edge cut of $g_\tau(t_1, t_2)$ such that one of the parts of the cut contains neither of the roots nor any of the unpaired leaves.  There is a \emph{one-sided subdivergence} of $g_\tau(t_1, t_2)$ if there is an edge $e$ which is a bridge of $g_\tau(t_1, t_2)$ with the property that one of the parts has none of the unpaired leaves but has the root from the tree that did not contain $e$.  This subdivergence is \emph{left-sided} if the part with no unpaired leaves has the root of $t_1$ in it and \emph{right-sided} otherwise.

For $S_i \subseteq L(t_i)$, define
\[
p_{k,S_1,S_2}(t_1, t_2)
\]
to be the number of partial gluings of size $k$ with domain a subset of $S_1$ and image a subset of $S_2$ which have no subdivergences (either fully internal or one-sided).  Define
\[
\overline{p}_{k,S_1,S_2}(t_1, t_2)
\]
to be the number of partial gluings of size $k$ with domain a subset of $S_1$ and image a subset of $S_2$ which have no fully internal or right-sided subdivergences.

When $l(t_1)=l(t_2)$ note that 
\[
n(t_1, t_2) = p_{l(t_1), L(t_1), L(t_2)}(t_1, t_2),
\]
so if we can calculate all $p_{k,S_1, s_2}(t_1, t_2)$ then we can calculate all $n(t_1, t_2)$. 

We have the symmetry $p_{k, S_1, S_2}(t_1, t_2) = p_{k, S_2, S_1}(t_2, t_1)$.
Note however, that $\overline{p}$ is not symmetric in $1,2$.  An analogous object could be defined with no fully internal or left-sided subdivergences, but we will not need this.

\medskip

Let $f_k$ be the rooted tree consisting of a root with $k$ leaves as children, so a rooted $k$-corolla, or $k$-fan, or $\ell_{k, \emptyset}$ in the notation of Section~\ref{sec families}.

The following observations follow from the definitions; the second point can be taken as a definition of how the one vertex tree behaves.
\begin{itemize}
    \item If $k>l(t_1)$ or $k>l(t_2)$ or $|S_1|<k$ or $|S_2|<k$ then 
    \[
    p_{k, S_1, S_2}(t_1, t_2) = \overline{p}_{k, S_1, S_2}(t_1, t_2) = 0.
    \]
    \item $p_{1, \{\bullet\}, S_2}(\bullet, t_2) = \overline{p}_{1, \{\bullet\}, S_2}(\bullet, t_2) = |S_2|$, where $\bullet$ is the rooted tree with a single vertex which is both root and leaf.
    \item 
    \[
    p_{k, S_1, S_2}(f_i, f_j) =  \overline{p}_{k, S_1, S_2}(f_i, f_j) = k!\binom{|S_1|}{k}\binom{|S_2|}{k} 
    \]
    \item If $k \neq l(t_1)$ then $p_{k, S_1, S_2}(t_1, t_2) = \overline{p}_{k, S_1, S_2}(t_1, t_2)$.
\end{itemize}
The last point is true because the only way to have a left-sided subdivergence is when $S_1=L(t_1)$ and so when that is impossble, there can be no distinction between $p$ and $\overline{p}$.  We also see from this that $p$ and $\overline{p}$ are not very different, but their difference will be key to making the recursion work in all cases.

\begin{definition}
Let $t_1, t_2, \ldots, t_k$ be rooted trees.  Define $B_+(t_1,t_2,\cdots ,t_k)$ to be the rooted tree formed from $t_1, t_2, \ldots, t_k$ by adding a new root vertex whose children are the roots of the $t_i$. 
\end{definition}

 We will first give a formula for $p$ and $\overline{p}$ for trees where the root has one child.

\begin{prop}\label{prop 1 child}
If $t_1 = B_+(t_{11})$ then
\[
  \overline{p}_{k, S_1, S_2}(t_1, t_2) = p_{k, S_1, S_2}(t_1, t_2)
  = \begin{cases}
    p_{k, S_1, S_2}(t_{11}, t_2) & \text{if $k\neq l(t_1)$ or $k\neq l(t_2)$}\\
    0 & \text{if $k=l(t_1) = l(t_2)$.}
  \end{cases} 
\]
\end{prop}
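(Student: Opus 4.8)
The plan is to lean entirely on the special shape of $t_1=B_+(t_{11})$. Its root $r$ has a single child $v$, the root of $t_{11}$, and these are joined by one edge $e=rv$; since $r$ then has degree one in every $g_\tau(t_1,t_2)$, the edge $e$ is a bridge of $g_\tau(t_1,t_2)$ for every partial gluing $\tau$. I also use that a partial gluing of $(t_1,t_2)$ is the same data as one of $(t_{11},t_2)$, since $L(t_1)=L(t_{11})$; writing $G=g_\tau(t_1,t_2)$ and $G'=g_\tau(t_{11},t_2)$, the graphs differ only by the pendant vertex $r$ and edge $e$, and by the fact that the distinguished $t_1$-vertex is $r$ in $G$ and $v$ in $G'$. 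I would first treat the case $k=l(t_1)=l(t_2)$: here $\tau$ is a complete gluing, so $G$ has no unpaired leaves, and deleting $e$ leaves the isolated root $r$ on one side and, on the other, a part containing the root of $t_2$ with no unpaired leaves. That is a (right-sided) one sided subdivergence, and since both $p$ and $\overline{p}$ forbid right-sided subdivergences we get $p_{k,S_1,S_2}(t_1,t_2)=\overline{p}_{k,S_1,S_2}(t_1,t_2)=0$, settling this case.

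In the remaining case, where $k\neq l(t_1)$ or $k\neq l(t_2)$, I would first establish $p=\overline{p}$. These counts can only differ through a left-sided subdivergence, so suppose $G$ has one: a bridge $f\in t_2$ with a side $X$ containing the root $r$ of $t_1$ and no unpaired leaves (and not the root of $t_2$). Since $r\in X$ and $e=rv$ is the only edge at $r$, we have $v\in X$, so cutting the pair $\{e,f\}$ isolates $X\setminus\{r\}$; this set contains neither root and no unpaired leaf, while $e$ and $f$ are internal edges of $t_1$ and $t_2$. Hence $\{e,f\}$ is a fully-internal subdivergence, so every gluing excluded by the left-sided condition is already excluded by the fully-internal one, and $p=\overline{p}$.

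To finish I would show $p_{k,S_1,S_2}(t_1,t_2)=p_{k,S_1,S_2}(t_{11},t_2)$ in this case by comparing the subdivergences of $G$ and of $G'$ under the relabeling $r\leftrightarrow v$. Subdivergences not using $e$ match exactly: a fully-internal subdivergence whose subdivergent side avoids $v$, a right-sided subdivergence cut by an internal edge of $t_{11}$, and a left-sided subdivergence cut by an internal edge of $t_2$ are each unaffected by adding or deleting the pendant $r,e$. The only subdivergences of $G$ with no direct counterpart are fully-internal ones whose subdivergent side contains $v$; such a side must be severed from $r$ using $e$, which forces all leaves of $t_1$ to be glued, and after deleting the pendant its other cut edge becomes a bridge of $G'$ exhibiting a left-sided subdivergence of $G'$. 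So a ``new'' fully-internal subdivergence appears in $G$ only when $G'$ (equivalently $G$) already has a left-sided subdivergence; therefore $G$ is subdivergence-free exactly when $G'$ is, and the two partial-gluing counts agree.

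The step I expect to be most delicate is the correspondence in the third paragraph: I must make the matching of subdivergences across the root change $v\leftrightarrow r$ fully precise, check that every bridge giving a one sided subdivergence is an internal edge (so that combining it with $e$ yields a genuinely \emph{fully-internal} subdivergence, which also needs $v$ to be internal, i.e.\ $t_{11}\neq\bullet$, the case $t_{11}=\bullet$ being a base case), and verify throughout that the distinguished side of a one sided subdivergence carries exactly the single root named in the definition rather than both roots. With those points secured, the bulleted base cases preceding the proposition cover the degenerate inputs and the recursion is justified.
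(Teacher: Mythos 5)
Your proposal is correct and follows essentially the same route as the paper's proof: the complete-gluing case is killed by the root edge being a bridge giving a right-sided subdivergence, the equality $p=\overline{p}$ comes from promoting a left-sided cut to a fully internal one by adding the root edge, and the remaining case is a bijective transfer of subdivergences between $g_\tau(t_1,t_2)$ and $g_\tau(t_{11},t_2)$ allowing fully internal subdivergences through $e$ to become one-sided ones. You simply spell out (and correctly flag) the edge cases, such as $t_{11}=\bullet$ and internality of the cut edges, that the paper's terser argument leaves implicit.
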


\begin{proof}
The first equality holds because any partial gluing giving a left-sided subdivergence also gives a fully internal subdivergence by cutting the edge incident to the root of $t_1$ along with the cut edge from the left-sided subdivergence.  

For the second equality, first note that if $k=l(t_1)=l(t_2)$ then all leaves must be glued, and so the edge incident to the root is a bridge giving a right-sided subdivergence for any gluing, hence $\overline{p}_{k, S_1, S_2}(t_1, t_2) = p_{k, S_1, S_2}(t_1, t_2) = 0$
If $k\neq l(t_1)$ or $k\neq l(t_2)$ then not all leaves are glued and so any subdivergence of such a partial gluing of $t_1$ and $t_2$ includes at most one of the root of $t_2$ and the root of $t_{11}$ and hence is a subdivergence (possibly one-sided) of the same partial gluing of $t_{11}$ and $t_2$ and conversely any subdivergence in a partial gluing of $t_{11}$ $t_{2}$ remains a subdivergence in the same partial gluing of $t_1$ and $t_2$.
\end{proof}

\begin{prop}\label{prop 2 children}
Suppose $t_1 = B_+(t_{11}, t_{12}, \ldots)$ with at least two subtrees.  Let $\overline{t} = B_+(t_{12}, \ldots)$ and let $S_{11} = S_1 \cap L(t_{11})$. Then
\[
\overline{p}_{k, S_1, S_2}(t_1, t_2) = \sum_{\substack{0 \leq j \leq k\\R\subseteq S_2\\|R|=j}} p_{j, S_{11}, R}(B_+(t_{11}), t_2)\overline{p}_{k-j, S_1 - S_{11}, S_2-R}(\overline{t}_1, t_2)
\]
and
\[
p_{k, S_1, S_2}(t_1, t_2)
= \begin{cases}
  \overline{p}_{k, S_1, S_2}(t_1, t_2) & \text{if $l(t_1)\neq k$} \\
  \displaystyle\sum_{\substack{R\subseteq S_2\\|R|=k}} \frac{p_{k, L(f_k), R}(f_k, t_2)}{k!}\overline{p}_{k, S_1, R}(t_1, t_2) & \text{if $l(t_1) = k$.}
  \end{cases}
\]
\end{prop}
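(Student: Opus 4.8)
The plan is to treat the two displayed identities in turn, since they are of different characters: the first is a genuine recursion that peels off the subtree $t_{11}$, while the second merely repackages $p$ in terms of $\overline{p}$ by accounting for the left-sided subdivergences that $\overline{p}$ ignores. For the $\overline{p}$ recursion I would start from the counting bijection. Every partial gluing $\tau$ with domain in $S_1$ and image in $S_2$ restricts to a partial gluing $\tau_{11}$ of the leaves of $t_{11}$ (domain in $S_{11}$, image some $R\subseteq S_2$ of size $j$) together with a partial gluing $\tau'$ of the leaves of $\overline{t}_1$ (domain in $S_1-S_{11}$, image in $S_2-R$ of size $k-j$); since $\tau$ is injective the two images are forced to be disjoint, and conversely any compatible pair reassembles uniquely into $\tau$. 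Summing over $j$ and over the choice of $R$ then reproduces the right-hand side, provided $p_{j,S_{11},R}(B_+(t_{11}),t_2)$ counts exactly the admissible $\tau_{11}$ and $\overline{p}_{k-j,S_1-S_{11},S_2-R}(\overline{t}_1,t_2)$ the admissible $\tau'$. So the whole content is the claim that $\tau$ has no fully internal and no right-sided subdivergence if and only if $\tau_{11}$ has no subdivergence whatsoever and $\tau'$ has no fully internal or right-sided subdivergence.

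The heart of the argument is therefore a localization-plus-sidedness lemma. The forbidden subdivergences (fully internal and right-sided) are exactly those whose cut includes an internal edge of $t_1$; since every internal edge of $t_1$ lies in a unique branch at the root $r_1$, each such subdivergence localizes to a single child subtree. A subdivergence localized to a branch of $\overline{t}_1$ transfers verbatim, with the same sidedness, to $g_{\tau'}(\overline{t}_1,t_2)$, because $\overline{t}_1$ retains the true root $r_1$; this is why $\tau'$ is governed by $\overline{p}$. A subdivergence localized to $t_{11}$, by contrast, transfers to $g_{\tau_{11}}(B_+(t_{11}),t_2)$ with a possible change of type: the new root of $B_+(t_{11})$ is only a stand-in for $r_1$, and a subdivergence that cuts off all of $t_{11}$ (legitimately fully internal in the full gluing, since $r_1$ stays attached to the other branches) becomes a left-sided subdivergence in the $t_{11}$ piece. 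Hence $\tau_{11}$ must avoid left-sided subdivergences as well, which is precisely what $p$ records; this asymmetry, already present in the one-child case (Proposition~\ref{prop 1 child}), is exactly why one factor uses $p$ and the other $\overline{p}$. I would turn the correspondence into a bijection in both directions, using disjointness of the images to argue that a subdivergence confined to one piece is neither created nor destroyed by the presence of the other, and I would check the degenerate boundaries (single-leaf branches, fully glued branches, and the $\bullet$ stand-in root) against the conventions fixed in the bulleted observations.

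For the second identity the case $l(t_1)\neq k$ is immediate: not all leaves of $t_1$ are glued, so no left-sided subdivergence can occur and $p=\overline{p}$ by the last bulleted observation. When $l(t_1)=k$ all leaves of $t_1$ are glued, and I would first identify the factor $\frac{1}{k!}\,p_{k,L(f_k),R}(f_k,t_2)$ as the $0/1$ indicator that gluing the fan $f_k$ onto $R$ is subdivergence free: because the leaves of $f_k$ are interchangeable, the glued graph depends only on $R$, so all $k!$ bijections behave alike and the count is either $0$ or $k!$. The crux is then the lemma that, for a gluing of $t_1$ with image $R$, the existence of a left-sided subdivergence depends only on $R$ and $t_2$. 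Concretely, since $t_1$ is connected and all its leaves are glued, the root side of a left-sided cut must contain all of $t_1$, forcing the cut edge into $t_2$ and the severed $t_2$ subtree to have leaf set exactly $R$; thus a left-sided subdivergence exists iff $R$ is the full leaf set of a subtree of $t_2$ hanging below an internal edge, which is exactly the condition detected by the fan, whose only possible subdivergences are of this left-sided form. Partitioning the gluings counted by $\overline{p}_{k,S_1,S_2}(t_1,t_2)$ according to their image $R$ and discarding those $R$ admitting a left-sided subdivergence then yields the stated formula, since removing fully internal, right-sided (via $\overline{p}$) and left-sided (via the indicator) subdivergences leaves precisely the gluings counted by $p$.

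I expect the main obstacle to be the subdivergence bookkeeping in the first identity: pinning down the exact correspondence of subdivergence types across the split, in particular proving rigorously that subdivergences localized to $t_{11}$ match \emph{all} subdivergences of the stand-in tree $B_+(t_{11})$ while those in $\overline{t}_1$ match \emph{only} the fully internal and right-sided ones, and handling the degenerate one-sided cases consistently with the $\bullet$ convention. The left-sided-equals-fan lemma in the second identity is the other delicate point, though it is more self-contained.
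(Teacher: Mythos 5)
Your proposal is correct and follows essentially the same route as the paper: the same split of a partial gluing into its $B_+(t_{11})$-part and its $\overline{t}$-part with disjoint images in $t_2$, the same localization of fully internal and right-sided subdivergences to the branch of $t_1$ containing the cut edge, and the same use of $\frac{1}{k!}p_{k,L(f_k),R}(f_k,t_2)$ as the indicator that $R$ is not the leaf set of a subtree of $t_2$. The only cosmetic difference is that the paper first writes the branch factor as $\overline{p}$ and then converts it to $p$ by citing Proposition~\ref{prop 1 child}, whereas you argue directly that a left-sided subdivergence of the $B_+(t_{11})$-piece is the shadow of a fully internal subdivergence of the full gluing --- the same observation read in the other direction.
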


\begin{proof}
Given a partial gluing of $t_1$ and $t_2$, let us separately consider the part of the gluing with $t_{11}$ partially glued to $t_2$ and the part of the gluing with $\overline{t}$ partially glued to $t_2$.  The partial gluing of $t_1$ and $t_2$ determines the partial gluings of $t_{11}$ and $t_2$ and of $\overline{t}$ and $t_2$ and the two partial gluings of $t_{11}$ and $t_2$ and $\overline{t}$ and $t_2$ determine a partial gluing of $t_1$ and $t_2$ as long as the target leaves of $t_2$ are disjoint.

It only remains to consider subdivergences.  Internal subdivergences or right-sided subdivergences in partial gluings of $t_{11}$ and $t_2$ or $\overline{t}$ and $t_2$ remain internal or right-sided subdivergences in the partial gluing of $t_1$ and $t_2$.  Likewise internal or right-sided subdivergences in a partial gluing of $t_1$ and $t_2$ must involve a cut edge on the $t_1$ side and that edge is either in $B_+(t_{11})$ or $\overline{t}$ and hence give an internal or right-sided subdivergence in one of the smaller partial gluings.  By the previous proposition, we may use $p$ instead of $\overline{p}$ in the first factor on the right hand side of the first equation.  This proves the first equation.

For the second equation, if we have not glued all leaves of $t_1$ then there is no distinction between $p$ and $\overline{p}$.  If we have glued all leaves of $t_1$, then note that for $R\subseteq L(t_2)$ with $|R|=k$, $p_{k, L(f_k), R}(f_k, t_2)$ is either $0$ or $k!$ depending on whether or not the $k$ leaves of $R$ are exactly the leaves of a subtree of $t_2$ or not.  Using this as an indicator function for when there is a left-sided subdivergence we get the second case of the second equation.  This proves the second equation.
\end{proof}

\begin{thm}
Propositions \ref{prop 1 child} and \ref{prop 2 children} along with the bulleted observations preceding them give a recursive definition and recursive algorithm to compute $p$ and $\overline{p}$ and hence to compute $n$.
\end{thm}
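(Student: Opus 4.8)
The plan is to treat this as a standard well-foundedness-plus-coverage argument. I would show three things: (i) every query $p_{k,S_1,S_2}(t_1,t_2)$ or $\overline{p}_{k,S_1,S_2}(t_1,t_2)$ is matched by at least one rule (a bulleted base case, Proposition \ref{prop 1 child}, or Proposition \ref{prop 2 children}); (ii) each rule reduces its query only to strictly simpler queries under a well-founded measure; and (iii) the value produced is the correct one, which is exactly what each proposition and observation already asserts. Together these say the rules form a terminating, total, and correct recursion, and the passage to $n$ is then immediate from the identity $n(t_1,t_2) = p_{l(t_1),L(t_1),L(t_2)}(t_1,t_2)$ recorded before the propositions.

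For coverage I would argue by case analysis on the root of $t_1$. Every rooted tree is either the one-vertex tree $\bullet$, handled by the second bullet, or has a root with at least one child. If both $t_1$ and $t_2$ are fans the third bullet gives a closed form directly; otherwise, if the root of $t_1$ has exactly one child we are in the hypothesis of Proposition \ref{prop 1 child}, and if it has at least two children we are in the hypothesis of Proposition \ref{prop 2 children}. The first bullet disposes of the degenerate size constraints $k>l(t_i)$ or $|S_i|<k$. Since these alternatives are exhaustive, every query is reducible. The symmetry $p_{k,S_1,S_2}(t_1,t_2)=p_{k,S_2,S_1}(t_2,t_1)$ lets us decompose whichever of $t_1,t_2$ is convenient, but is not needed for coverage.

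For termination I would use the lexicographic measure $\mu=(|V(t_1)|+|V(t_2)|,\,\varepsilon)$, where $\varepsilon=1$ for a $p$-query and $\varepsilon=0$ for an $\overline{p}$-query, ordered so that $p$ outranks $\overline{p}$ on the same pair of trees. A short check shows each recursive call strictly decreases $\mu$: Proposition \ref{prop 1 child} replaces $t_1$ by the proper subtree $t_{11}$ and so drops the first coordinate; in the first equation of Proposition \ref{prop 2 children} both $B_+(t_{11})$ and $\overline{t}_1$ have strictly fewer vertices than $t_1=B_+(t_{11},t_{12},\ldots)$, using that there are at least two subtrees, again dropping the first coordinate; and in the second equation the call to $\overline{p}_{k,S_1,R}(t_1,t_2)$ keeps the trees fixed but lowers $\varepsilon$ from $1$ to $0$.

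The one point needing care, and which I expect to be the main obstacle, is the factor $p_{k,L(f_k),R}(f_k,t_2)$ in the second equation of Proposition \ref{prop 2 children}. Naively re-feeding it into that equation loops, since $l(f_k)=k$ again triggers the same case on the same pair $(f_k,t_2)$. I would resolve it exactly as the proof of Proposition \ref{prop 2 children} does: this factor depends only on $t_2$ and $R$ and equals $k!$ or $0$ according to whether $R$ is precisely the leaf set of a subtree of $t_2$, so it is evaluated directly as an indicator rather than by further recursion; equivalently, applying symmetry turns its inner fan-factor into a fan--fan base case and terminates the descent. With this factor treated as directly computable, every genuine recursive call strictly decreases $\mu$, the measure is well-founded, and the recursion terminates. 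Correctness of the returned values then follows by induction on $\mu$, each step being one of the already-proved propositions or an immediate base-case observation, which completes the proof.
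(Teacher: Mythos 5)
Your proposal is correct and matches the paper's proof in substance: both arguments establish that the bulleted base cases and the two propositions cover every query and that the recursion is well-founded, the paper descending on the depth and root-degree of $t_1$ and then (via the symmetry) of $t_2$, while you package the same descent into the lexicographic measure of total vertex count refined by the $p$-versus-$\overline{p}$ flag. Your explicit handling of the factor $p_{k,L(f_k),R}(f_k,t_2)$ as a directly evaluated indicator is exactly how the paper's proof of Proposition \ref{prop 2 children} intends it to be used, and is worth spelling out, since a naive recursive evaluation of that factor would indeed loop when $t_1=f_k$.
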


\begin{proof}
Given a $t_1$ with more than one vertex, using Proposition \ref{prop 2 children}, we can reduce to the case that $t_1$ has either one child of the root and the same depth or is some $f_k$.  Using Proposition \ref{prop 1 child} we can reduce $t_1$ with one child of the root to a tree of depth one less.  Returning to Proposition \ref{prop 2 children} we can reduce the number of children of the root without increasing the depth.  Continuing this process we can reduce $t_1$ to a tree of depth 1, that is to some $f_k$.  

Swapping $1$ and $2$, we can do the same for $t_2$, so it only remains to consider the case when both trees are fans which is given in the bulleted observations preceding the propositions.  These bullets also cover the trivial cases.

Finally, as noted above when $t_1$ and $t_2$ both have $k$ leaves then $n(t_1, t_2) = p_{k, L(t_1), L(t_2)}(t_1, t_2)$.
\end{proof}

\subsection{Cut Preprocessing Algorithm}\label{ssec preproc}

The following discussion presents another algorithm for counting subdivergence-free gluings between arbitrary trees. A summary of the discussion can be found in Algorithms ~\ref{alg subfree} and ~\ref{alg subs}.  An implementation of the algorithm is included with the source in the arXiv version of this paper \cite{this}, and can also be found on GitHub at \href{https://github.com/jordanmzlong/Subdivergence-Free-Trees}{github.com/jordanmzlong/Subdivergence-Free-Trees}.

Let $t_1$ and $t_2$ be rooted trees whose leaves are each assigned a label in $\mathbb{Z}_{\geq 1}$, as in Figure ~\ref{fig:coloured_rooted_trees}. We will say that these rooted trees are \emph{coloured}, and refer to the labels as \emph{colours}. We will also let $c(t_1)$ denote the multiset of colours of $t_1$. We will say that a gluing is \emph{colour-preserving} if each of the leaves of $t_1$ are identified with a leaf of $t_2$ with the same colour. This algorithm solves the more general problem of counting subdivergence-free colour-preserving gluings between coloured rooted trees $t_1$ and $t_2$, which we will still denote $n(t_1, t_2)$. Our original problem of counting subdivergence-free gluings between trees can then be solved as a special case where every leaf in $t_1$ and $t_2$ has the same colour.

We will assume in this section that our trees do not have non-root 2-valent vertices that are not incident to leaves. This is still completely general, since if we have two edges $e_1$ and $e_2$ with such a 2-valent vertex between them, a gluing has a subdivergence containing $e_1$ or $e_2$ if and only if the same gluing has a subdivergence containing the $e_1$ after $e_2$ is contracted. Thus, prior to running the algorithm, we can preprocess the trees by contracting such edges.

\begin{figure}
  \centering
  \includegraphics[]{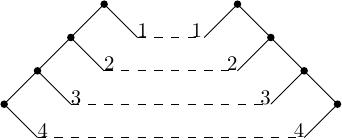}
  \caption{Two coloured rooted trees, with colours 1,2,3,4. The only colour-preserving gluing is drawn with dashed lines. Note that there are no colour-preserving subdivergence-free gluings.}
  \label{fig:coloured_rooted_trees}
\end{figure}

Note that if $c(t_1) \neq c(t_2)$ then there are no colour-preserving gluings, and thus no subdivergence-free colour-preserving gluings. If $c(t_1) = c(t_2)$, the approach of the algorithm is to count the total number of colour-preserving gluings between the trees, and subtract the number of colour-preserving gluings that result in a subdivergence. To count colour-preserving gluings, we multiply together the factorials of the multiplicities in $c(t_1)$. For example, if $c(t_1) = c(t_2) = \{1,1,1,2,2,3\}$, then there are $3!2!1! = 12$ colour-preserving gluings, due to the $3!$ ways to map the leaves coloured $1$, $2!$ ways to map the leaves coloured $2$, and single way to map the leaf coloured $3$. We denote the number of colour-preserving gluings between $t_1$ and $t_2$ by $r(t_1,t_2)$.

Counting colour-preserving gluings that result in a subdivergence is more involved. In order to do so, we first need to define some terms. An edge is \emph{internal} if it is not incident to a leaf.  Given two edges $e_1$ and $e_2$ in a rooted tree $t_1$, if the path from $e_1$ to the root of $t_1$ contains $e_2$, we say that $e_1$ is a \emph{descendant} of $e_2$ and $e_2$ is an \emph{ancestor} of $e_1$. We will call a subset of edges $E$ of a rooted tree $t_1$ \emph{siblings} if no edge in $E$ is a descendant of another edge in $E$. 
The edges which are cut in order to give a subdivergence will be called the edges in the \emph{boundary} of the subdivergence.
Our approach to counting colour-preserving gluings with subdivergences is to iterate through all combinations of nonempty sets of internal siblings of $t_1$, and nonempty sets of internal siblings of $t_2$, and for each combination $S = S_1 \cup S_2$ with $S_1 \subseteq E(t_1)$ and $S_2 \subseteq E(t_2)$, count the number of gluings with the following two properties. 
\begin{itemize}
    \item \emph{Property 1:} each member of $S$ is in the boundary of a subdivergence in the gluing. 
    \item \emph{Property 2:} if any other internal edge $e$ that is in the boundary of a subdivergence in the gluing is added to $S$, $e$ is a descendant of an edge in $S_1$ or $S_2$.
\end{itemize}       

We claim that this process will indeed count all of the gluings with subdivergences.

To see that every gluing with subdivergences is counted, suppose we have a gluing of $t_1$ and $t_2$ that contains a subdivergence. Let $E = E_1 \cup E_2$ be the set of edges that in the boundary of subdivergences in this gluing, with $E_1 \subseteq E(t_1)$ and $E_2 \subseteq E(t_2)$. $E$ can be identified with a combination of sets of siblings $S = S_1 \cup S_2$ by removing descendants in $E_1$ and $E_2$ to give $S_1$ and $S_2$ respectively. Then this gluing would be counted when the algorithm considers $S_1 \cup S_2$. Property 1 is satisfied as every member of this set is in the boundary of a subdivergence in the gluing, and property 2 is satisfied since every other edge that is in the boundary of a subdivergence in the gluing is a descendant of an edge in $S_1$ or $S_2$. 

To see that no gluing would be counted twice, suppose a gluing is counted when considering the combination of siblings $S = S_1 \cup S_2$ and $R = R_1 \cup R_2$, with $R \neq S$. Without loss of generality there is an edge $e$ in $R_1$ not present in $S_1$. By property 1 with $R$, $e$ is in the boundary of a subdivergence in the gluing. However, if $e$ is added to $S$, by property 2 with $S$ we have that $e$ is a descendant of another edge $f$ in $S_1$. Then $f \notin R_1$, which is a set of siblings containing $e$. Also, by property 1 with $S$ we have that $f$ is in the boundary of a subdivergence in the gluing. But $f$ is not a descendant of anything in $R_1$, contradicting property 2 with $R$. Thus $R = S$, so each gluing is only counted when considering a single combination of siblings.

Now, for a given combination of siblings $S_1$ and $S_2$, we explain how to count gluings satisfying properties 1 and 2. Note that for each edge $e_1$ in $S_1$ to be in the boundary of a subdivergence, there must be a distinct edge $e_2$ in $S_2$ such that all of the leaves below $e_1$ can be joined with all of the leaves below $e_2$. Thus, if $|S_1| \neq |S_2|$, no gluing can satisfy property 1. 

If $|S_1| = |S_2|$, we process the trees further, noting that each edge in $S_1$ and $S_2$ is a bridge. Let the components that arise from removing each edge in $S_1$ from $t_1$ be $t_{1_0}, t_{1_1}, \dots, t_{1_{|S_1|}}$, where $t_{1_0}$ is the component containing the root and $t_{1_i}$ is the component containing everything deeper than edge $e_i$ in $t_1$. For each edge $e_i$ in $S_1$, attach a leaf whose colour is unique to $c(t_{1_i})$ to the vertex in $t_{1_0}$ that was adjacent to $e_i$ in $t_1$, and call the resulting coloured rooted tree $u_1$. An example of this process is shown in Figure ~\ref{fig:cut_preprocessing}. In a similar way, form $t_{2_0}, t_{2_1}, \dots, t_{2_{|S_2|}}$ and $u_2$ from $t_2$ and $S_2$. 

\begin{figure}
\begin{subfigure}{.5\textwidth}
  \centering
  \includegraphics[]{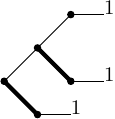}
  \caption{}
  \label{fig:cut_preprocessing_before}
\end{subfigure}%
\begin{subfigure}{.5\textwidth}
  \centering
  \includegraphics[]{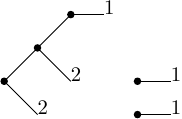}
  \caption{}
  \label{fig:cut_preprocessing_after}
\end{subfigure}%
    \caption{Shown in (A) is a coloured rooted tree $t_1$. The set of siblings to be cut is in bold. (B) shows $u_1$, $t_{1_1}$ and $t_{1_2}$ formed from cutting the bold edges.}
    \label{fig:cut_preprocessing}
\end{figure}

We claim that the number of gluings satisfying properties 1 and 2 with respect to a given $S = S_1 \cup S_2$ is given by $n(u_1,u_2)\cdot \prod_{i=1}^{|S_1|}r(t_{1_i},t_{1_i})$.

To see this, first note that $u_1$ and $u_2$ are representations of $t_1$ and $t_2$ with everything below edges in $S$ contracted. Thus, by counting colour-preserving gluings between $u_1$ and $u_2$ and multiplying this by the number of ways that the contracted parts can be glued together, we count colour-preserving gluings between $t_1$ and $t_2$ that satisfy property 1. Indeed, if we have a colour-preserving gluing between $u_1$ and $u_2$, then by joining the components below each edge in $S_1$ to the corresponding component below an edge in $S_2$ as specified by the gluing, we have a colour-preserving gluing where each edge in $S$ will be in the boundary of a subdivergence. If we restrict this slightly by counting subdivergence-free colour-preserving gluings between $u_1$ and $u_2$ and multiplying this by the number of ways that the contracted parts can be glued together, we count colour-preserving gluings between $t_1$ and $t_2$ that satisfy properties 1 and 2. This makes use of the fact that there are no internal 2-valent vertices in our tree, which guarantees that any edge with an element of $S$ as a descendant will have a different set of leaves below it than any of the components. By avoiding gluings with subdivergences between $u_1$ and $u_2$, this ensures that no edge with a member of $S$ as a descendant will be in the boundary of a subdivergence.

The number of ways that a given component can be glued to another component is the number of colour-preserving gluings between the two components, and thus only depends on the set of colours in each component, rather than their structures. Thus, the number of colour-preserving gluings between two components that can be glued together is the same as the number of colour-preserving gluings between one of the components and another copy of itself.

In practice, we can first consider each tree individually to form a list of all possible preprocessed trees, one for each set of siblings. We can then comb through the lists for $t_1$ and $t_2$ together counting gluings in the manner mentioned above. This approach has the advantage that the list of preprocessed trees can be reused if we wish to count subdivergence-free colour-preserving gluings between $t_1$ or $t_2$ and a different tree.

\medskip

\begin{algorithm}[H]\label{alg subfree}
\SetAlgoLined
\SetKwInOut{Input}{input}\SetKwInOut{Output}{output}
\Input{Two coloured rooted trees $t_1$ and $t_2$}
\Output{Number of subdivergence-free colour-preserving gluings between $t_1$ and $t_2$}
\BlankLine
 \Return ColourPreserving($c(t_1)$, $c(t_2)$) - Subdivergence($t_1$,$t_2$)\;
 \caption{SubdivergenceFree}
\end{algorithm}

\medskip

\begin{algorithm}[H]\label{alg subs}
\SetAlgoLined
\SetKwInOut{Input}{input}\SetKwInOut{Output}{output}
\Input{Two coloured rooted trees $t_1$ and $t_2$}
\Output{Number of colour-preserving gluings with subdivergences between $t_1$ and $t_2$}
\BlankLine
 total := 0\;
 \For{each nonempty set of siblings $S_1$ of $t_1$}{
  \For{each nonempty set of siblings $S_2$ of $t_2$}{
     construct $u_1, t_{1_1}, \dots, t_{1_{|S_1|}}$ from $t_1$ and $S_1$\;
     construct $u_2, t_{2_1}, \dots, t_{2_{|S_2|}}$ from $t_2$ and $S_2$\;
     total += SubdivergenceFree($u_1$,$u_2$)*$\prod_{i=1}^{|S_1|} $ColourPreserving$(c(t_{1_i}),c(t_{1_i}))$\;
  }
 }
 \Return total
 \caption{Subdivergence}
\end{algorithm}

\section{Discussion}\label{sec discussion}

As noted in the introduction gluings of trees are also studied under the name of \emph{tanglegrams}  \cite{10.1093/bioinformatics/btr210, 5551100, BILLEY2017239, ralaivaosaona_et_al:LIPIcs.AofA.2018.32, GESSEL2021105498}, which arose from phylogenetics.  Subdivergence-free is known as irreducible in this context.  Because of the phylogenetic motivation, planar tanglegrams are of particular interest.

In the planar case gluings of trees are also studied under the name of \emph{matings of trees}, often with an interest in limiting properties and applications to Liouville quantum gravity.  See \cite{gwynne2019mating} for a survey.  The mating construction is also considered for its pure combinatorial value, see for instance \cite{biane2021mating}.  Basic to this literature is how mated pairs of Dyck or Motzkin paths correspond to mated trees and how the mated pairs of paths themselves correspond to a quarter plane walk.     Subdivergences do not appear in this context.  Subdivergences in the tree gluing would correspond to places where in the mating of the two Dyck or Motzkin paths two subpaths which both preserve the condition of remaining above and returning to the original level are glued.  In the quarter plane walk this would correspond to a subpath which would remain in the quarter plane if translated to the origin and also end at the origin.  However, between restricting to planar gluings and having other questions in mind, the entire study has a quite different flavour.

\medskip

Our original motivation for studying the enumeration of subdivergence-free gluings of trees was in order to better understand how many Feynman diagrams with cuts can be formed.  From this perspective, the next two questions to consider are cuts which may leave some cycles intact, so rather than gluing two trees we would be gluing two graphs with leaves, and to consider graphs with more subdivergences, that is multiple roots in the two halves.  Additionally, because of anomalous thresholds, see the introduction of \cite{KYcut} and references therein, it would also be interesting to consider cuts into more than two pieces, equivalently, to consider gluing more than two trees or graphs together.

From an enumerative perspective, subdivergence-free gluings of trees provide a generalization of connected permutations that to our knowledge have not been explored until now.  Further classes of trees with nice counting sequences could be investigated.

Another interesting observation is that the second algorithm is based on sets of siblings.
Sets of siblings are precisely admissible cuts in the sense of the Connes-Kreimer coproduct on rooted trees (see \cite{ck0}) and so the second algorithm proceeds essentially by considering the coproducts of $t_1$ and $t_2$ term by term.  The Connes-Kreimer Hopf algebra is not appearing as a renormalization Hopf algebra in this context as we are not working with insertion trees showing the insertion structure of subdivergences in a Feynman diagram, but rather with trees coming directly from a cut Feynman diagram.  None the less it is appearing in a way that has an interesting interplay with the cuts while being different from the Hopf algebraic cointeraction studied in \cite{KYcut}. 

The first algorithm, by contrast, is closely related to the operation $B_+$ which adds a new root to a collection of trees.  $B_+$ is also special in the Connes-Kreimer Hopf algebra context as it is a Hochschild 1-cocycle and is the foundation of the Hopf algebraic approach to Dyson-Schwinger equations and Ward identities, see \cite{Ybook} for an overview.

\end{document}